\documentclass[10pt,a4paper]{article}

\usepackage{amsfonts,amsgen,amsmath,amstext,amsbsy,amsopn,amsthm,cases,listings
}
\usepackage{multirow}
\usepackage{graphicx}
\usepackage{epstopdf}
\usepackage{pgf,tikz}
\usepackage{mathrsfs}
\usepackage[marginal]{footmisc}
\usepackage{enumitem}
\usepackage[titletoc]{appendix}
\usepackage{booktabs}
\usepackage{url}
\usepackage{mathtools}
\usepackage{pgfplots}
\usepackage{authblk}
\usepackage{amssymb}
\usepackage{wasysym}
\usepackage{empheq}
\usepackage{dsfont}
\usepackage{tikz}
\usepackage{longtable}
\usepackage{float}
\usepackage{subfigure}
\pgfplotsset{compat=1.18}
\usepackage{mathrsfs}
\usepackage{wasysym}
\InputIfFileExists{uwasy.fd}{}{}
\DeclareFontShape{U}{wasy}{b}{n}{<5><6><7><8><9> gen * wasy <10> <10.95> <12> <14.4> <17.28> <20.74> <24.88> wasyb10}{}
\usetikzlibrary{arrows}
\usepackage{aligned-overset}
\usepackage{bm}
\usepackage{bbm}
\usepackage[T1]{fontenc}
\usepackage[backref=page]{hyperref}
\usepackage[a4paper,margin=2.5cm]{geometry}
\usepackage[thinc]{esdiff}

\allowdisplaybreaks[1]

\definecolor{uuuuuu}{rgb}{0.27,0.27,0.27}
\definecolor{sqsqsq}{rgb}{0.1255,0.1255,0.1255}

\newtheorem{definition}{Definition}[section]
\newtheorem{theorem}[definition]{Theorem}
\newtheorem{lemma}[definition]{Lemma}

\newtheorem{claim}[definition]{Claim}

\newcommand{\coex}{\operatorname{co}\!\operatorname{ex}}

\newcommand{\exlink}{\operatorname{ex}_{\rm link}}
\newcommand{\pilink}{\pi_{\rm link}}

\makeatletter
\newsavebox\myboxA
\newsavebox\myboxB
\newlength\mylenA
\newcommand*\xoverline[2][0.81]{%
    \sbox{\myboxA}{$\m@th#2$}%
    \setbox\myboxB\null%
    \ht\myboxB=\ht\myboxA%
    \dp\myboxB=\dp\myboxA%
    \wd\myboxB=#1\wd\myboxA%
    \sbox\myboxB{$\m@th\overline{\copy\myboxB}$}%
    \setlength\mylenA{\the\wd\myboxA}%
    \addtolength\mylenA{-\the\wd\myboxB}%
    \ifdim\wd\myboxB<\wd\myboxA%
       \rlap{\hskip 0.5\mylenA\usebox\myboxB}{\usebox\myboxA}%
    \else
        \hskip -0.5\mylenA\rlap{\usebox\myboxA}{\hskip 0.5\mylenA\usebox\myboxB}%
    \fi}
\makeatother
\begin{document}
\title{\bf\Large A note on the $t$-partite link problem of F\"uredi}
\date{\today}
\author[1]{Jianfeng Hou\thanks{Email: \texttt{jfhou@fzu.edu.cn}}}
\author[2,3]{Xinmin Hou\thanks{Email: \texttt{xmhou@ustc.edu.cn}}}
\author[2]{Xizhi Liu\thanks{Email: \texttt{liuxizhi@ustc.edu.cn}}}
\author[ ]{Jiasheng Zeng\thanks{Email: \texttt{jasonzeng@mail.ustc.edu.cn}}}
\author[1]{Yixiao Zhang\thanks{Email: \texttt{fzuzyx@gmail.com}}}
\affil[1]{\small Center for Discrete Mathematics, Fuzhou University, Fuzhou, China}
\affil[2]{\small School of Mathematical Sciences, University of Science and Technology of China, Hefei, Anhui, China}
\affil[3]{\small Hefei National Laboratory, University of Science and Technology of China, Hefei, Anhui, China}
\maketitle
\begin{abstract}
    Motivated by the Erd\H{o}s--S\'{o}s bipartite link conjecture, F\"{u}redi (Oberwolfach, 2004) asked for the asymptotic maximum edge density $\pi_{\mathrm{link}}(t)$ of $3$-graphs in which the link graph of every vertex is $t$-partite. Goldwasser's recursive blow-up construction based on projective planes gives the lower bound $\pi_{\mathrm{link}}(t)\ge 1-t^{-1}-(2+o_t(1))t^{-2}$ whenever $t-1$ is a prime power. In this note, we prove the upper bound $\pi_{\mathrm{link}}(t)\le 1-t^{-1}-t^{-2}/12$ for every $t \ge 2$. Together with Goldwasser's construction, this determines, up to a constant factor, the correct order of the gap between $\pi_{\mathrm{link}}(t)$ and the trivial averaging upper bound $1-t^{-1}$ for all prime-power values of $t-1$.

    In fact, our argument applies in the more general setting of $3$-graphs with no generalized daisies, equivalently, $3$-graphs in which the link graph of every vertex is $K_{t+1}$-free. We also establish an analogous upper bound for the positive $(r-1)$-codegree Tur\'an density of generalized daisies.
\end{abstract}
\section{Introduction}\label{SEC:Introduction}

Given an integer $r\ge 2$, an $r$-uniform hypergraph, or simply an $r$-graph, is a collection of $r$-subsets of a vertex set. We usually identify a hypergraph with its edge set and write $|\mathcal H|$ for the number of its edges.

For a $3$-graph $\mathcal H$ and a vertex $v\in V(\mathcal H)$, the \emph{link graph} of $v$ is
\[
    L_{\mathcal H}(v):=
    \left\{xy\in \binom{V(\mathcal H)\setminus\{v\}}2:
    vxy\in \mathcal H\right\}.
\]
An old conjecture of Erd\H{o}s and S\'os, see \cite[p.~328]{FF84}, asserts that if every vertex link of an $n$-vertex $3$-graph is bipartite, then
\[
    |\mathcal H|\le \frac14\binom n3+o(n^3).
\]
This conjecture remains open. One difficulty is that, even if the conjectured bound is correct, there are several rather different constructions that are asymptotically extremal; see \cite{FF84,MPS11,FV13} for further discussion. Extending this conjecture, F\"uredi (Oberwolfach 2004, see \cite[Problem~14]{MPS11}) asked for the corresponding asymptotic problem when every vertex link has chromatic number at most $t$.

Define
\[
    \exlink(n,t):=
    \max\left\{
    |\mathcal H|:
    |V(\mathcal H)|=n,\,
    \chi(L_{\mathcal H}(v))\le t
    \text{ for every }v\in V(\mathcal H)
    \right\},
\]
where $\chi(L_{\mathcal H}(v))$ denotes the chromatic number of the link graph of $v$. Let
\[
    \pilink(t):=
    \lim_{n\to\infty}\frac{\exlink(n,t)}{\binom n3}.
\]

Goldwasser's recursive blow-up construction based on projective planes gives the following lower bound for F\"uredi's $t$-partite link problem; see Section~\ref{SEC:LowerBounds} for details.

\begin{theorem}[Goldwasser]\label{THM:t_partite_lower}
    Suppose that $t-1$ is a prime power. Then
    \[
        \pilink(t)
        \ge \frac{(t-1)^2}{t^2-t+2}
        =1-\frac1t-\frac{2+o(1)}{t^2}.
    \]
\end{theorem}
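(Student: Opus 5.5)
The plan is to realize Goldwasser's recursive blow-up of a projective plane explicitly, check that every link is $t$-colourable, and then compute its edge density. Write $q:=t-1$, which is a prime power, and fix a projective plane $\mathbb P$ of order $q$. It has $m:=q^2+q+1=t^2-t+1$ points, every line has $q+1=t$ points, and through every point pass exactly $q+1=t$ lines.

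\emph{Construction.} Define $n$-vertex $3$-graphs $\mathcal H_n$ recursively. Partition $[n]$ into $m$ parts $V_p$, one for each point $p\in\mathbb P$, with sizes as equal as possible. The edge set of $\mathcal H_n$ consists of two kinds of triples:
\begin{itemize}
\item \emph{crossing edges}: all triples $xyz$ with $x\in V_a$, $y\in V_b$, $z\in V_c$, where $a,b,c$ are three distinct non-collinear points of $\mathbb P$;
\item \emph{internal edges}: a copy of $\mathcal H_{|V_p|}$ placed inside each part $V_p$.
\end{itemize}
No edge meets exactly two parts.

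\emph{Links are $t$-partite.} We prove $\chi(L_{\mathcal H_n}(v))\le t$ by induction on $n$. Fix $v\in V_p$. Since no edge meets exactly two parts, the link splits as a disjoint union of two graphs with no edges between them:
\begin{itemize}
\item the link of $v$ in the copy of $\mathcal H_{|V_p|}$ on $V_p$, which is $t$-colourable by induction;
\item a graph on $V\setminus V_p$ coming from the crossing edges.
\end{itemize}
For the second graph, colour each $x\in V_a$ (with $a\ne p$) by the unique line through $p$ and $a$; this uses at most $t$ colours. Suppose $xy$ is a link edge with $x\in V_a$, $y\in V_b$. If $x$ and $y$ had the same colour, then either $a=b$, which is impossible for a crossing edge, or $p,a,b$ would be collinear, which is also excluded. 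So this colouring is proper. Colouring the two components independently then gives $\chi\le t$.

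\emph{Density.} Let $d_n:=|\mathcal H_n|/\binom n3$. Counting triples by the parts their vertices lie in, pick ordered vertices $x,y,z$ at random. Then $x,y$ lie in distinct parts $a\ne b$ with probability $(m-1)/m+O(1/n)$. Given that, $z$ lies in a part off the line $ab$ with probability $(m-t)/m+O(1/n)$. The probability that all three lie in the same part is $1/m^2+O(1/n)$. Hence
\[
d_n=\frac{(m-1)(m-t)}{m^2}+\frac{d_{\lfloor n/m\rfloor}}{m^2}+O(1/n).
\]
Iterating this recursion (the $O(1/n)$ errors sum to $o(1)$, since the recursion depth is logarithmic and the part sizes shrink geometrically) gives $\liminf d_n\ge d$, where $d=\frac{(m-1)(m-t)}{m^2}+\frac d{m^2}$, i.e.
\[
d=\frac{(m-1)(m-t)}{m^2-1}=\frac{m-t}{m+1}.
\]
Substituting $m=t^2-t+1$ gives $m-t=(t-1)^2$ and $m+1=t^2-t+2$, so $d=(t-1)^2/(t^2-t+2)$. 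Expanding, $d=1-\frac1t-\frac{2+o(1)}{t^2}$. Since $\exlink(n,t)\ge|\mathcal H_n|$, this yields the stated bound; existence of the limit $\pilink(t)$ is the standard averaging monotonicity.

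The only genuinely delicate step is the link-colouring. It relies crucially on having no edges with exactly two vertices in one part, so that the internal link and the external link do not interact; everything else is routine bookkeeping of error terms in the recursion.
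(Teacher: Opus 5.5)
Your proposal is correct and follows the paper's argument: the recursive balanced blow-up of the non-collinear triple system of $PG(2,t-1)$, with each link coloured by the $t$ lines through the base point, and the fixed-point equation $d=\frac{(m-1)(m-t)}{m^2}+\frac{d}{m^2}$ for the limiting density. You also spell out something the paper only asserts (``the recursive construction preserves this link-colouring property''), namely that internal and crossing link edges never interact because no edge meets exactly two parts; the one bookkeeping point to tidy is that parts of size $\lceil n/m\rceil$ also occur, which you handle by bounding with the minimum of the two sub-densities before passing to the $\liminf$.
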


Our first main result is the following upper bound.

\begin{theorem}\label{THM:t_partite_upper}
    For every integer $t\ge2$,
    \[
        \pilink(t)
        \le 1-\frac1t-\frac{1}{12t^2}.
    \]
\end{theorem}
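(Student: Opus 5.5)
The plan is to work with the complementary $3$-graph $\mathcal N:=\binom{V}{3}\setminus\mathcal H$ of non-edges and its links $M_v:=\binom{V\setminus\{v\}}{2}\setminus L_{\mathcal H}(v)$. We have $|\mathcal N|=\frac13\sum_v|M_v|$, and the trivial bound $1-\frac1t$ is exactly the statement that each $M_v$ has at least the Turán minimum $\approx n^2/(2t)$ edges. So it suffices to show
\[
\sum_v |M_v|\ \ge\ \frac{n^3}{2t}\Bigl(1+\frac{1}{4t}\Bigr)-o(n^3),
\]
that is, the links cannot all be close to complete $t$-partite graphs in a mutually consistent way.

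Fix a small parameter and split vertices into \emph{good} ones, where $|M_v|\le n^2/(2t)+\varepsilon n^2/t^2$, and bad ones. If the bad vertices carry a constant proportion, the bound follows directly. Otherwise, apply the quantitative stability version of Turán's theorem (Füredi's: a $K_{t+1}$-free graph with $e(T_t(n))-s$ edges becomes $t$-partite after deleting $s$ edges). For a good $v$ this makes $L_{\mathcal H}(v)$ nearly complete $t$-partite with parts $V_1^v,\dots,V_t^v$. Then $M_v$ is close to the disjoint union of the cliques $\binom{V_i^v}{2}$, and the deviation is controlled by the excess $|M_v|-n^2/(2t)$. For a part we also need its sizes: the Turán-excess of a partition is $\frac12\sum_i(|V_i^v|-n/t)^2$, so every part has size $\frac nt\pm O(\sqrt{\varepsilon}\,n/t)$.

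The structural step propagates these blocks. If $x\in V_i^v$, then $vxy\in\mathcal N$ for almost all $y\in V_i^v$. Hence in $M_x$ the vertex $v$ is adjacent to almost all of $V_i^v$, so (for good $x$) $v$ and most of $V_i^v$ lie in a common part of $x$'s partition. Iterating, the sets $B=\{v\}\cup V_i^v$ behave like blocks of a near-design: almost all triples inside a block lie in $\mathcal N$, and every good vertex lies in about $t$ blocks, of size $\approx n/t$, partitioning its neighbourhood. Two distinct blocks can share only $o(n)$ vertices, since a large common part would force two parts of some link to merge.

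Now count. There are about $m\approx nt/(n/t)=t^2$ blocks of size $b\approx n/t$, with pairwise intersections $o(n)$. The Fisher-type / Cauchy--Schwarz inequality for a family of large sets with small pairwise intersections gives $\sum_v d(v)^2\le\sum_B|B|+o(n^2)$, where $d(v)$ is the number of blocks through $v$. Combined with $\sum_B|B|\approx nt$ and $d(v)\approx t$, this yields $nt^2\lesssim nt$, a contradiction for $t\ge2$. So the idealised configuration is impossible, and quantitatively a positive proportion of vertices must carry an excess of order $n^2/t^2$ in $|M_v|$.

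The main obstacle is making this last step quantitative with explicit constants. We need to convert ``the block system fails the Fisher inequality by a factor $\sim t$'' into a lower bound $\sum_v(|M_v|-n^2/(2t))\ge n^3/(8t^2)$. The plan is to run the Cauchy--Schwarz count directly on the weighted co-degrees $d_{\mathcal N}(xy)=|\{v:vxy\in\mathcal N\}|$, without extracting exact blocks. Since $\sum_{xy}d_{\mathcal N}(xy)=3|\mathcal N|$, we would bound $\sum_{xy}d_{\mathcal N}(xy)^2$ from above via the $K_{t+1}$-freeness of the links, and from below by Cauchy--Schwarz. Optimising the trade-off between the stability error and the Cauchy--Schwarz gain should produce the constant $\frac1{12}$; I expect this bookkeeping to be the delicate part.
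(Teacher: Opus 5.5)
\paragraph{Verdict: genuine gap.} Your argument is qualitative: it needs every link error to be $o(n^2)$. The theorem lives exactly at the scale where your structural picture breaks down.

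\paragraph{Calibration.} The target is off by a factor $3$. Since $|\mathcal N|=\frac13\sum_v|M_v|$ and you need $|\mathcal N|\ge(\frac1t+\frac1{12t^2})\binom n3$, you need $\sum_v\bigl(|M_v|-\frac{n^2}{2t}\bigr)\ge\frac{n^3}{24t^2}-o(n^3)$, not $n^3/(8t^2)$.

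More seriously, the good/bad dichotomy cannot be run with a \emph{small} threshold. If ``good'' means excess at most $\varepsilon n^2/t^2$ with $\varepsilon$ small, then a proportion $\beta$ of bad vertices only guarantees total excess $\beta\varepsilon n^3/t^2$. This is below $n^3/(24t^2)$ unless $\beta\varepsilon\ge\frac1{24}$. So every vertex must be allowed an excess of order $n^2/t^2$. The paper obtains this uniformly from the cloning lemma applied to an extremal $\mathcal H$, which together with F\"uredi's theorem gives $|B^x|\le\frac{1+\theta}{2}\varepsilon n^2$ and $|M^x|\le(1+\theta)\varepsilon n^2$ with $\varepsilon=\frac1{12t^2}$. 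At that scale none of your $o(n)$ error terms is available.

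\paragraph{Block intersections and the Fisher count.} ``Two distinct blocks share only $o(n)$ vertices'' is false at this scale. In Goldwasser's blow-up of $PG(2,t-1)$, whose links have excess $\Theta(n^2/t^2)$:
\begin{itemize}
\item the blocks are the unions of classes along a line, and there are $m=t^2-t+1$ of them, each of size $tn/m$;
\item any two blocks meet in a full class of size $n/m$;
\item these numbers satisfy your identity $\sum_v d(v)^2=\sum_B|B|+\sum_{B\ne B'}|B\cap B'|$ exactly, since $nt^2=nt+m(m-1)\cdot n/m$.
\end{itemize}
So the Fisher count yields no contradiction. You would have to charge block overlaps to missing link pairs, and the proposal has no mechanism for this.

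\paragraph{The proposed repair points the wrong way.} Cauchy--Schwarz gives $\sum_{xy}d_{\mathcal N}(xy)^2\ge 9|\mathcal N|^2/\binom n2$. Combining this with any upper bound on $\sum_{xy}d_{\mathcal N}(xy)^2$ can only bound $|\mathcal N|$ from above, whereas you need a lower bound on $|\mathcal N|$.

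\paragraph{The missing idea.} Replace the global incidence count by a second, \emph{local} application of Tur\'an's theorem inside a single part. Write, as in the paper, $B^w$ for the link edges inside parts of a max-cut $t$-partition of $L(w)$, and $M^w$ for its missing cross pairs (not your $M_v$). Your propagation step is essentially the paper's Claims~\ref{CLAIM:local_one_revised} and~\ref{CLAIM:local_two_revised}. For $w\in B:=V_j^x$, let $P(w)$ be the part of $w$'s partition containing $x$. Then $P(w)$ misses at most $d_{M^w}(x)+d_{B^x}(w)+1$ vertices of $B$, and meets $A:=V_i^x$ ($i\ne j$) in at most $d_{M^x}(w)+d_{B^w}(x)$ vertices.

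Hence for $a\in A\setminus P(w)$ and $b\in B\cap P(w)$, the pair $ab$ is a cross pair for $w$, so $abw\in\mathcal H$ unless $ab\in M^w$. Averaging over $a\in A$ gives some $a_0$ with $|L(a_0)[B]|\ge\frac12\bigl(|B|^2-|B|-\mathrm{err}\bigr)$. But $K_{t+1}$-freeness of $L(a_0)$ forces $|L(a_0)[B]|\le(1-\frac1t)\frac{|B|^2}{2}$. The Tur\'an gap $|B|^2/(2t)$ is what produces the order $t^{-2}$.

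The weighted choices keep $\mathrm{err}\le(1+\theta)\frac{34}{3}\varepsilon t|B|^2$:
\begin{itemize}
\item $x$ is chosen with $\Phi(x)$ at most its average;
\item $i(j)$ is chosen by averaging with weights $|V_i|$;
\item $j$ is chosen by averaging with weights $s_j^2$.
\end{itemize}
This error is below $|B|^2/t$ for $\varepsilon=\frac1{12t^2}$, because $\frac{34}{3}\cdot\frac1{12}<1$.
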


Together with Goldwasser's construction, Theorem~\ref{THM:t_partite_upper} determines, up to a constant factor, the correct order of the gap between $\pilink(t)$ and the trivial averaging upper bound $1-1/t$ along the infinite sequence of values for which $t-1$ is a prime power.

We prove Theorem~\ref{THM:t_partite_upper} through a more general hypergraph Tur\'an problem. Given a family $\mathcal F$ of $r$-graphs, an $r$-graph $\mathcal H$ is $\mathcal F$-free if it contains no member of $\mathcal F$ as a subgraph. The Tur\'an number $\operatorname{ex}(n,\mathcal F)$ is the maximum number of edges in an $\mathcal F$-free $r$-graph on $n$ vertices, and the Tur\'an density is
\[
    \pi(\mathcal F):=
    \lim_{n\to\infty}
    \frac{\operatorname{ex}(n,\mathcal F)}{\binom n r}.
\]
The existence of this limit follows from the standard averaging argument of Katona--Nemetz--Simonovits. The study of Tur\'an numbers is one of the central themes of extremal combinatorics. For graphs, Tur\'an's theorem~\cite{Tur41} and the Erd\H{o}s--Stone--Simonovits theorem~\cite{ES46,ES66} determine all Tur\'an densities. By contrast, for $r\ge3$, determining hypergraph Tur\'an densities remains notoriously difficult; see the survey of Keevash~\cite{Kee11} for background.

For a graph $F$, the \emph{$r$-suspension} $\hat F^r$ is obtained by adjoining a fixed set $S$ of $r-2$ new vertices to every edge of $F$. The \emph{$(r,t)$-daisy} $\mathcal D_{r,t}$, introduced by Bollob\'as--Leader--Malvenuto~\cite{BLM11}, is the $r$-suspension of $K_t$. Daisies are closely connected with several other extremal problems. One source of motivation comes from Tur\'an-type questions in the hypercube, where related forbidden configurations have been studied in a number of works; see, for example, \cite{Kos76,JE89,AKS07,JT10,Alo24,EIL25}. Another source comes from extremal problems for set families with forbidden subposets, where suspension-type and layered configurations arise naturally; see \cite{Buk09,GL16}. Thus the daisy problem may be viewed as a hypergraph Tur\'an problem that also reflects phenomena from the Boolean lattice and the hypercube.

Bollob\'as--Leader--Malvenuto~\cite{BLM11} conjectured that, for each fixed $t\ge4$, the density $\pi(\mathcal D_{r,t})$ tends to zero as $r\to\infty$. This conjecture was disproved by Ellis--Ivan--Leader~\cite{EIL24}, who proved, among other results, that when $t-1$ is a prime power,
\[
    \lim_{r\to\infty}\pi(\mathcal D_{r,t+1})
    \ge
    \prod_{i=1}^{\infty}
    \left(1-\frac{1}{(t-1)^i}\right)
    =
    1-\frac1t-\frac{2+o(1)}{t^2}.
\]
In particular, by the theorem of Baker--Harman--Pintz on gaps between consecutive primes~\cite{BHP01}, one obtains
\[
    \lim_{r\to\infty}\pi(\mathcal D_{r,t+1})
    \ge
    1-\frac1t-O(t^{-1.475})
\]
for all $t$. The exponent $1.475$ comes from the current unconditional prime-gap estimate; under Cram\'er's conjecture~\cite{Cra36}, it would improve to $2-o(1)$.

On the upper-bound side, Hou--Liu--Zhang~\cite{HLZ26DaisyOld} previously proved the first nontrivial estimate of the form
\[
    \pi(\mathcal D_{r,t+1})
    \le
    1-\frac1t-\Omega\left(\frac{1}{t^8}\right).
\]
Our main Tur\'an result improves this to the best possible order of magnitude.

\begin{theorem}\label{THM:daisy_Turan_upper_bound}
    For every $r\ge3$ and $t\ge2$, we have
    \[
        \pi(\mathcal D_{r,t+1})
        \le 1-\frac1t-\frac{1}{12t^2}.
    \]
\end{theorem}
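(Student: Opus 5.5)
We first reduce to $r=3$. The standard link argument gives $\pi(\mathcal D_{r,t+1})\le \pi(\mathcal D_{r-1,t+1})$. To see this, take a $\mathcal D_{r,t+1}$-free $r$-graph $\mathcal H$. For every vertex $v$, the link $(r-1)$-graph $L_{\mathcal H}(v)$ is $\mathcal D_{r-1,t+1}$-free, since a copy of the daisy in the link together with $v$ gives a copy of $\mathcal D_{r,t+1}$ whose kernel is $S\cup\{v\}$. Averaging over $v$ then gives $|\mathcal H|\le \frac nr\,\operatorname{ex}(n-1,\mathcal D_{r-1,t+1})$, and hence the density inequality. It therefore suffices to prove the bound for $3$-graphs $\mathcal H$ in which every link graph $L_{\mathcal H}(v)$ is $K_{t+1}$-free. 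This case also implies Theorem~\ref{THM:t_partite_upper}, because a $t$-partite link is $K_{t+1}$-free.

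For $r=3$ the trivial bound is obtained as follows. By Tur\'an's theorem each link has at most $(1-1/t)n^2/2$ edges, and summing over vertices gives $|\mathcal H|\le (1-1/t+o(1))\binom n3$. To gain $\Omega(t^{-2})$ we exploit the fact that the links of different vertices must be compatible: an edge $xyz$ appears in all three links $L(x),L(y),L(z)$. I would use a stability/defect accounting. Let $d=\mathrm{density}(\mathcal H)$, and suppose $d\ge 1-1/t-\varepsilon$ with $\varepsilon=1/(12t^2)$. By averaging, a typical link has density at least $1-1/t-O(\varepsilon)$. By Tur\'an stability (or, more quantitatively, a Füredi-type ``remove few edges to become $t$-partite'' bound: a $K_{t+1}$-free graph with $e\ge (1-1/t)n^2/2-\delta n^2$ becomes $t$-partite after deleting $\le \delta n^2$ edges), a typical link $L(v)$ is close to a complete balanced $t$-partite graph with parts $V_1^v,\dots,V_t^v$, each of size about $n/t$.

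The key counting step is to consider a vertex $v$ and a part $V_i^v$ of size roughly $n/t$. Within $V_i^v$, the link of $v$ has almost no edges, so for $x,y\in V_i^v$ the triple $vxy$ is almost never present. Now look at the links of the vertices $x\in V_i^v$. Each $L(x)$ is also nearly complete $t$-partite, so it contains about $(1-1/t)\binom{|V_i^v|}{2}$ pairs inside $V_i^v$, unless $V_i^v$ aligns with a part of $L(x)$. Counting triples inside $V_i^v$ from both sides gives a contradiction. On one side, the triples $xyz\subseteq V_i^v$ are counted in the links of their own vertices, and since $x$ is in $V_i^v$, $x$'s link restricted to $V_i^v$ must have density about $1-1/t$ for $x$'s total link density to be near extremal. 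On the other side, consider the complement triples $vxy$ with $x,y$ in the same part: these missing triples are charged to $v$. I would formalize this by a weighted double count. For each missing triple, count the number of its vertices whose link parts ``explain'' it. Every triple $xyz$ with, say, $x,y$ in a common part of $L(z)$ is missing. The aim is to show that a positive fraction ($\ge 1/3$ on average) of missing triples must be unexplained, i.e.\ missing without being forced by the partition structure of one of their vertices. This happens because in the $t$-partite structure of the links, the pattern ``$yz$ in the same part of $L(x)$, but $xz$ in different parts of $L(y)$'' forces inconsistency. One cannot have the partition relation symmetric across all three vertices of every triple with density exactly $1/t$ missing: the missing triples would form a $3$-graph in which each link is a disjoint union of $t$ cliques of size $n/t$, and such a Steiner-like design would require each pair in exactly $n/t$ missing triples consistently. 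This is impossible to realize exactly, and it loses a factor of order $1/t^2$ (compare the projective-plane constraint in Goldwasser's construction).

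The main obstacle is this last rigidity step: showing quantitatively that the missing-triple $3$-graph $\overline{\mathcal H}$, whose links are all close to unions of $t$ cliques, must have density at least $1/t+1/(12t^2)$. What I would try first is to look at a vertex $v$ and a part $V_i^v$ ($|V_i^v|\approx n/t$), and count pairs $(x,y)$ in $V_i^v$ such that $xy$ lies in the same part of $L(z)$ for some $z$. Using convexity (Cauchy--Schwarz on the part sizes of the partitions induced on $V_i^v$ by the links of vertices $x\in V_i^v$), the restriction of $L(x)$ to $V_i^v$ loses at least $(1/t)\binom{|V_i^v|}2$ pairs, plus the pairs containing $v$'s own structure. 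This gives an extra $\Omega(n^3/t^3)$ missing triples per part, and hence $\Omega(n^3/t^2)$ in total. Tracking constants carefully, including the $1/3$ overcounting from each triple having three vertices and a factor $1/4$ from Cauchy--Schwarz slack, should yield the constant $1/12$.
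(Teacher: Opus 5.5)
Your reduction to $r=3$ and your use of F\"uredi's stability theorem to make each link nearly complete $t$-partite agree with the paper. However, the step that should produce the extra $\Omega(t^{-2})$ loss is never carried out, and the mechanism you sketch for it rests on a false claim. You assert that for $x\in V_i^v$ the graph $L(x)[V_i^v]$ must have density about $1-1/t$. The structure forces the opposite. For most $x,y\in V_i^v$ we have $xy\notin L(v)$, so $vxy\notin\mathcal H$ and hence $vy\notin L(x)$. Since $L(x)$ is nearly complete $t$-partite, this puts $y$ in the same part of $L(x)$ as $v$, up to the missing cross-pairs of $L(x)$. So $V_i^v$ essentially sits inside one part of $L(x)$, $L(x)[V_i^v]$ is nearly empty, and the triples inside $V_i^v$ are missing in a fully consistent way, exactly like collinear triples on a line in the projective-plane construction. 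Counting missing triples inside a single part therefore gives nothing beyond the trivial $\frac1t\binom n3$. The rest of the sketch (``a Steiner-like design is impossible'', ``$1/3$ overcounting times a $1/4$ Cauchy--Schwarz slack gives $1/12$'') is not an argument. Approximate designs of this kind do exist, and quantifying their defect is the whole problem.

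The missing idea is to use two \emph{different} parts $A=V_i^x$ and $B=V_j^x$ of one base vertex $x$. For $w\in B$, the reasoning above shows that $B\cup\{x\}$ lies mostly in the part $P(w)$ of $w$'s partition containing $x$. Meanwhile $A$ mostly avoids $P(w)$: $aw$ is a cross pair of $L(x)$, so typically $xa\in L(w)$. Hence for $a\in A\setminus P(w)$ and $b\in B\cap P(w)$, the pair $ab$ is a cross pair of $L(w)$, so typically $bw\in L(a)$. Averaging then gives some $a_0\in A$ with $L(a_0)[B]$ nearly complete, contradicting Tur\'an's bound $(1-\frac1t)\frac{s^2}{2}$ with $s=|B|$. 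The available gap $s^2/(2t)\approx n^2/(2t^3)$ is a factor of order $t$ smaller than the per-link error $\varepsilon n^2$, so two things are needed. First, every link (not just a typical one) must be near-extremal; the paper gets this from extremality and the cloning lemma. Second, you need a weighted choice of $x$ (via the potential $\Phi$) and of $(i,j)$ (weights $|V_i|$, then $s_j^2$). This ensures the total error charged to $(x,A,B)$ is at most $K\varepsilon t s^2$ with $K=34/3<12$, and this bookkeeping is what yields the constant $\frac1{12}$. None of this appears in your sketch. The paper also handles $t=2$ separately, via $\pi(K_4^{3-})<0.287$, because its part-size estimates require $t\ge3$.
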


For $r=3$, being $\mathcal D_{3,t+1}$-free is equivalent to requiring every vertex link to be $K_{t+1}$-free. Thus Theorem~\ref{THM:daisy_Turan_upper_bound} is stronger than Theorem~\ref{THM:t_partite_upper}, since every $t$-partite graph is $K_{t+1}$-free.

Let us briefly compare our approach with the previous proof in \cite{HLZ26DaisyOld}. The argument in \cite{HLZ26DaisyOld} compares different link partitions using the Birkhoff--von Neumann theorem on doubly stochastic matrices~\cite{Bir46,JDM60}, whose standard proof is closely related to Hall's theorem~\cite{Hal35}. In contrast, the present proof of Theorem~\ref{THM:daisy_Turan_upper_bound} uses a weighted averaging argument over link partitions. Roughly speaking, after approximating each $K_{t+1}$-free link by a $t$-partite graph using a theorem of F\"uredi, we average the missing and bad degrees over a carefully chosen base vertex and a carefully chosen pair of parts. This produces a vertex $a_0$ and a set $B$ such that $L(a_0)[B]$ has more edges than allowed by Tur\'an's theorem.

Although Theorem~\ref{THM:daisy_Turan_upper_bound}, together with the projective-plane constructions, determines the correct order of the gap from the classical Tur\'an bound $1-1/t$, determining the exact constant appears to be substantially more delicate. Indeed, even the first nontrivial case is closely related to classical open problems on $3$-graphs. When $(r,t)=(3,2)$, the forbidden daisy $\mathcal D_{3,3}$ is the $3$-graph $K_4^{3-}$ obtained from $K_4^3$ by deleting one edge. The determination of $\pi(K_4^{3-})$ has been studied for decades; see, for example, de Caen~\cite{Cae83}, Frankl--F\"uredi~\cite{FF84}, Mubayi~\cite{Mu03}, Baber--Talbot~\cite{BT11}, and Falgas-Ravry--Vaughan~\cite{FV13}. This smallest case already illustrates why one should not expect the exact value of $\pi(\mathcal D_{r,t+1})$ to follow from a sharp-order argument alone.

We also study an analogous positive codegree problem. Let $\mathcal H$ be a nonempty $r$-graph. The \emph{positive $(r-1)$-codegree} of $\mathcal H$ is
\[
    \delta_{r-1}^+(\mathcal H):=
    \min\left\{d_{\mathcal H}(S):
    S\in\binom{V(\mathcal H)}{r-1},\ d_{\mathcal H}(S)>0\right\},
\]
where $d_{\mathcal H}(S)$ denotes the number of edges containing the $(r-1)$-set $S$. For an $r$-graph $F$, define
\[
    \coex_{r-1}^+(n,F):=
    \max\{\delta_{r-1}^+(\mathcal H):
    |V(\mathcal H)|=n,\,
    \mathcal H\text{ is }F\text{-free}\}.
\]
The positive $(r-1)$-codegree density is
\[
    \gamma_{r-1}^+(F):=
    \lim_{n\to\infty}
    \frac{\coex_{r-1}^+(n,F)}n.
\]
The existence of this limit was established by Pikhurko~\cite{Pik23}, and several first examples were studied by Halfpap--Lemons--Palmer~\cite{HLP24}; further jumps and exact values were obtained by Balogh--Halfpap--Lidick\'y--Palmer~\cite{BHLP24}.

We determine the order of the positive $(r-1)$-codegree density of $\mathcal D_{r,t+1}$. The lower bound comes from a linear-algebraic construction of Ellis--Ivan--Leader; when $r=3$, this reduces to the non-recursive projective-plane blow-up.

\begin{theorem}\label{THM:positive_lower_general_r}
Let $r\ge 3$ and $t\ge3$, and suppose that $t-1$ is a prime power. Then
\[
    \gamma_{r-1}^+(\mathcal D_{r,t+1})
    \ge
    \frac{(t-1)^r-(t-1)^{r-1}}{(t-1)^r-1}
    = 1-\frac1t-\frac1{t^2}+O(t^{-3}).
\]
\end{theorem}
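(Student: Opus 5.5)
We prove the bound with an explicit blow-up construction over a projective space, in the spirit of Ellis--Ivan--Leader. Let $q:=t-1$ be a prime power and let $P:=\mathrm{PG}(r-1,q)$ be the set of $1$-dimensional subspaces of $\mathbb F_q^r$. Then
\[
N:=|P|=\frac{q^r-1}{q-1}.
\]
Given $n$, split $[n]$ into $N$ parts $V_p$, $p\in P$, with sizes as equal as possible. Give each vertex $v\in V_p$ the label $\ell(v)=p$. Let $\mathcal H_n$ be the $r$-graph whose edges are the $r$-sets $\{v_1,\dots,v_r\}$ for which $\ell(v_1),\dots,\ell(v_r)$ span $\mathbb F_q^r$. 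This is the same as asking that the labels be $r$ distinct points in general position, i.e. that representative vectors form a basis. For $r=3$ this is the non-recursive blow-up of the projective plane of order $q$, in which edges are the triples of non-collinear points.

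\emph{Step 1: the positive codegree.} Take an $(r-1)$-set $S$ with $d_{\mathcal H_n}(S)>0$. Its labels must be linearly independent, so they span a hyperplane $H$. A vertex $w\notin S$ extends $S$ to an edge exactly when $\ell(w)\notin H$. The hyperplane $H$ contains $(q^{r-1}-1)/(q-1)$ points, so exactly $N-(q^{r-1}-1)/(q-1)=q^{r-1}$ points lie outside it. None of the vertices of $S$ lies outside $H$, hence
\[
d_{\mathcal H_n}(S)=\sum_{p\notin H}|V_p|\ \ge\ q^{r-1}\left\lfloor \frac nN\right\rfloor .
\]
Therefore $\delta^+_{r-1}(\mathcal H_n)\ge (q^{r-1}/N)\,n-O(1)$. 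Since
\[
\frac{q^{r-1}}{N}=\frac{q^{r-1}(q-1)}{q^r-1}=\frac{(t-1)^r-(t-1)^{r-1}}{(t-1)^r-1},
\]
dividing by $n$ and letting $n\to\infty$ gives the stated lower bound, provided $\mathcal H_n$ is $\mathcal D_{r,t+1}$-free.

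\emph{Step 2: $\mathcal D_{r,t+1}$-freeness.} This is the main point. Suppose $\mathcal H_n$ contains a copy of $\mathcal D_{r,t+1}$, with core $S$ of size $r-2$ and petal vertices $x_1,\dots,x_{t+1}$, so that $S\cup\{x_i,x_j\}\in\mathcal H_n$ for all $i\ne j$. Each such set is an edge, so the labels of $S$ are independent and span an $(r-2)$-dimensional subspace $W$. Let $\pi:\mathbb F_q^r\to\mathbb F_q^r/W\cong\mathbb F_q^2$ be the quotient map. The condition that $\ell(S)\cup\{\ell(x_i),\ell(x_j)\}$ spans $\mathbb F_q^r$ is equivalent to two things: $\pi(\ell(x_i))$ and $\pi(\ell(x_j))$ are nonzero, and they are distinct points of $\mathrm{PG}(1,q)$. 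So $x_1,\dots,x_{t+1}$ would give $t+1$ pairwise distinct points of $\mathrm{PG}(1,q)$. But this line has only $q+1=t$ points, a contradiction. The same quotient argument shows that every link of an $(r-2)$-set is a blow-up of $K_t$, hence is $K_{t+1}$-free.

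\emph{Step 3: the asymptotic expansion.} Write $q=t-1$. Then $q^{r-1}/N=(q-1)q^{r-1}/(q^r-1)=(1-1/q)(1+O(q^{-r}))$. Substituting $q=t-1$ gives $1-\frac1{t-1}+O(t^{-3})$, using $r\ge3$, and $\frac1{t-1}=\frac1t+\frac1{t^2}+O(t^{-3})$ gives the expansion in the statement. None of these steps is a real obstacle. The only points needing care are that the codegree is taken over $(r-1)$-sets with positive degree, which forces independent labels, and the quotient reduction to the projective line in Step 2. The restriction $t\ge3$ is needed so that $q=t-1\ge2$ and the construction is nondegenerate.
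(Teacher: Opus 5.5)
Your proposal is correct and is essentially the paper's construction. The paper blows up the $r$-graph on $\mathbb F_q^r\setminus\{0\}$ whose edges are the linearly independent $r$-sets; this coincides with your blow-up of $\mathrm{PG}(r-1,q)$, since nonzero vectors on a common line are twins that never lie in a common edge. Its codegree count ($q^{r-1}$ points off the hyperplane) and its quotient-by-$W$ argument reducing to the $q+1$ points of $\mathrm{PG}(1,q)$ are the same as yours. The only cosmetic difference is that you check $\mathcal D_{r,t+1}$-freeness directly in the blow-up, whereas the paper shows every $(r-2)$-link of the base $r$-graph is $(q+1)$-partite and then uses that $\mathcal D_{r,t+1}$ is covering.
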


Our corresponding upper bound follows from Theorem~\ref{THM:t_partite_upper}, the Andr\'asfai--Erd\H{o}s--S\'os degree theorem, and a standard edge-counting consequence of positive codegree.

\begin{theorem}\label{THM:positive_upper}
    For every $r\ge 3$ and $t\ge 2$,
    \[
        \gamma_{r-1}^+(\mathcal D_{r,t+1})
        \le
        1-\frac1t-\frac{1}{36t^2}.
    \]
\end{theorem}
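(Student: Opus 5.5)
Let $\mathcal H$ be an $n$-vertex $\mathcal D_{r,t+1}$-free $r$-graph with $\delta_{r-1}^+(\mathcal H)\ge (1-\frac1t-\varepsilon)n$, where $\varepsilon<\frac1{36t^2}$. The goal is a contradiction for large $n$. First I will reduce to $r=3$. Fix an $(r-3)$-set $T$ lying in some edge, and let $\mathcal H_T$ be the $3$-graph of triples $xyz$ with $T\cup\{x,y,z\}\in\mathcal H$, on $n-r+3$ vertices. Then $\mathcal H_T$ is $\mathcal D_{3,t+1}$-free, since a daisy in $\mathcal H_T$ with kernel $\{v\}$ gives one in $\mathcal H$ with kernel $T\cup\{v\}$. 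Every pair of $\mathcal H_T$ with positive codegree has codegree at least $(1-\frac1t-\varepsilon)n-O(1)$. So from now on $r=3$, and every vertex link $L(v)$ is $K_{t+1}$-free.

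Next I will bound the link degrees. Fix a vertex $v$ with nonempty link $L(v)$, and let $x$ be a non-isolated vertex of $L(v)$. Every neighbour $y$ of $x$ in $L(v)$ satisfies $d(vxy)>0$, hence $\deg_{L(y)}\dots$; more directly, $\deg_{L(v)}(x)=d_{\mathcal H}(vx)\ge(1-\frac1t-\varepsilon)n$. Thus $L(v)$, restricted to its non-isolated vertex set $U_v$, is a $K_{t+1}$-free graph with minimum degree at least $(1-\frac1t-\varepsilon)n\ge(1-\frac1t-\varepsilon)|U_v|$. Since $\varepsilon<\frac1{36t^2}$ is far below the Andr\'asfai--Erd\H{o}s--S\'os threshold $1-\frac{3}{3t-1}$, the AES theorem makes $L(v)[U_v]$ $t$-partite. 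The degree condition also forces $|U_v|\ge(1-\frac1t-\varepsilon)n\cdot\frac{t}{t-1}$, and each part has size at most $|U_v|-\delta$. So every link is $t$-partite, and $\mathcal H$ is a candidate for the $t$-partite link problem.

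Then I will count edges. Every pair $vx$ with $x\in U_v$ has codegree at least $(1-\frac1t-\varepsilon)n$, so summing over $v$ and $x$ gives $3|\mathcal H|\ge(1-\frac1t-\varepsilon)n\sum_v|U_v|$. The key edge-counting step is to show $\sum_v|U_v|$ is close to $n^2$ minus lower order. It is not enough to argue that a pair $uv$ lying in an edge has positive codegree. Instead I will use that $x\in U_v$ iff $v\in U_x$, so $\sum_v|U_v|=2|\partial\mathcal H|$, and pass to a blow-up or restriction. Concretely, I will apply Theorem~\ref{THM:t_partite_upper} to the subhypergraph: its proof is presumably robust and gives $|\mathcal H|\le(1-\frac1t-\frac1{12t^2})\binom n3+o(n^3)$. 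I also expect it to adapt to $|\mathcal H|\le(1-\frac1t-\frac1{12t^2}+o(1))\,|\partial\mathcal H|\,n/3$ whenever the shadow graph is dense relative to the vertex set. I will take this from the structure of the proof via the statement, applied to the shadow graph's dense part. Comparing this with $3|\mathcal H|\ge 2(1-\frac1t-\varepsilon)n|\partial\mathcal H|$ gives $\varepsilon\ge c/t^2$.

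The main obstacle is this last comparison: converting codegree information into a density statement fitting Theorem~\ref{THM:t_partite_upper}, which concerns all $\binom n3$ triples, when the shadow may not be complete. The loss of the constant from $\frac1{12}$ to $\frac1{36}$ suggests the intended route. First, AES shows every vertex of the shadow has shadow-degree at least $\frac{t}{t-1}\delta\approx n$, up to a factor. Then I will restrict to the vertex set $W$ of a shadow component, which is nearly complete, and apply Theorem~\ref{THM:t_partite_upper} on $W$. A factor-$3$ loss comes from the inequality between the codegree bound and the averaged density, $\binom{|W|}3$ versus $\frac{n}{3}\binom{|W|}{2}$. I will try this first and accept the weaker constant.
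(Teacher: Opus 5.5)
Your reduction to $r=3$ (taking the link of an $(r-3)$-set at once, where the paper iterates vertex links) is fine. So is your Andr\'asfai--Erd\H{o}s--S\'os step showing that every $L(v)[U_v]$ is $t$-partite with $|U_v|\ge(\alpha/s)n$, where $\alpha n=\delta_2^+(\mathcal H)$ and $s=1-\frac1t$; this matches the paper. The gap is the edge-counting step, which is the real content of the argument.

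\begin{itemize}
\item You rely on a ``relative'' bound $|\mathcal H|\le(s-\frac1{12t^2}+o(1))|\partial\mathcal H|n/3$. Your only justification is that the proof of Theorem~\ref{THM:t_partite_upper} is ``presumably robust''. This does not follow from the stated theorem, and you never prove it.
\item Your inequality $3|\mathcal H|\ge\alpha n\sum_v|U_v|$ is off by a factor of $2$. Summing $d(vx)$ over ordered pairs with $x\in U_v$ counts each edge six times, so the correct bound is $6|\mathcal H|\ge\alpha n\sum_v|U_v|$.
\item Your final paragraph does not close the gap. The ``factor-$3$ loss'' from $\binom{|W|}3$ versus $\frac n3\binom{|W|}2$ is not an inequality you derive. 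You also never check that your route yields a constant at least $\frac1{36t^2}$.
\end{itemize}

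The missing idea is that you need neither near-completeness of the shadow nor a relative Tur\'an bound: the estimate $|U_v|\ge(\alpha/s)n$ is enough if you use it twice.

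\begin{itemize}
\item First, $|L(v)|\ge\frac12|U_v|\alpha n\ge\frac{\alpha^2}{2s}n^2$ for every non-isolated $v$.
\item Second, every vertex of $U_v$ is itself non-isolated, so there are at least $(\alpha/s)n$ non-isolated vertices.
\end{itemize}

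Hence $3|\mathcal H|\ge\frac{\alpha^3}{2s^2}n^3$, that is, $|\mathcal H|\ge(\alpha^3/s^2-o(1))\binom n3$. The paper compares this with the absolute bound $(s-\frac1{12t^2}+o(1))\binom n3$ from Theorem~\ref{THM:t_partite_upper}. Since $(s-x)^3/s^2>s-3x$ with $x=\frac1{36t^2}$, assuming $\alpha>s-x$ gives a contradiction. The $3$ in $36=3\cdot12$ comes from the cubic dependence on $\alpha$ (codegree, link vertex set, number of non-isolated vertices), not from the comparison you describe.

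Your idea of applying Theorem~\ref{THM:t_partite_upper} to $\mathcal H[W]$, with $W$ the non-isolated vertices, is also legitimate. Combined with the same link bound it gives $\alpha^2\le s(s-\frac1{12t^2})+o(1)$, which is even stronger than needed. But that computation is exactly what your proposal leaves undone.
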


Together, Theorems~\ref{THM:positive_lower_general_r} and~\ref{THM:positive_upper} show that, whenever $t-1$ is a prime power,
\[
    \gamma_{r-1}^+(\mathcal D_{r,t+1})
    =
    1-\frac1t-\Theta(t^{-2})
\]
for every fixed $r\ge3$.


\paragraph{Organization of the paper.}
In Section~\ref{SEC:LowerBounds}, we record the projective-plane construction for Theorem~\ref{THM:t_partite_lower} and the linear-algebraic construction for Theorem~\ref{THM:positive_lower_general_r}. In Section~\ref{SEC:MainProof}, we prove the main daisy Tur\'an upper bound, Theorem~\ref{THM:daisy_Turan_upper_bound}, which implies Theorem~\ref{THM:t_partite_upper}. Finally, in Section~\ref{SEC:PositiveCodegree}, we prove the positive codegree upper bound, Theorem~\ref{THM:positive_upper}.

\section{Lower bound constructions}\label{SEC:LowerBounds}
In this section we record the projective-plane construction used in Theorem~\ref{THM:t_partite_lower}, and the linear-algebraic construction used in Theorem~\ref{THM:positive_lower_general_r}.

Let $q\ge2$ be a prime power, and let $PG(2,q)$ be the projective plane of order $q$. It has
\[
    m:=q^2+q+1
\]
points and the same number of lines. Every line contains exactly $q+1$ points, every point lies on exactly $q+1$ lines, and every pair of distinct points lies on a unique line.

Let $\mathcal P_q$ be the $3$-graph whose vertices are the points of $PG(2,q)$ and whose edges are the triples of non-collinear points. Then
\begin{align*}
    |\mathcal P_q|
    &= \binom m3-m\binom{q+1}{3}
      =\frac16 q^3(q+1)(q^2+q+1)
      =\frac16q^3(q+1)m.
\end{align*}

We first take a balanced blow-up of $\mathcal P_q$, and then recursively repeat the same construction inside each vertex class. Let $E(n)$ denote the resulting number of edges on $n$ vertices, ignoring divisibility errors. Then
\[
    E(n)=|\mathcal P_q|\left(\frac nm\right)^3+mE\left(\frac nm\right)+o(n^3).
\]
Thus, if $E(n)=(c+o(1))n^3$, then
\[
    c=\frac{|\mathcal P_q|}{m^3}+\frac{c}{m^2},
\]
and hence
\begin{align*}
    \frac{E(n)}{\binom n3}
    =\frac{6|\mathcal P_q|}{m^3}\cdot\frac1{1-1/m^2}+o(1)
    =\frac{q^3(q+1)}{m^2-1}+o(1)
      =\frac{q^2}{q^2+q+2}+o(1).
\end{align*}

The construction is $\mathcal D_{3,q+2}$-free, and in fact every vertex link is $(q+1)$-partite. Indeed, for a point $p$ of the projective plane, the $q+1$ lines through $p$ partition all other points; two points are adjacent in the link of $p$ precisely when they lie on different lines through $p$. The recursive construction preserves this link-colouring property.

Taking $q=t-1$, we obtain Theorem~\ref{THM:t_partite_lower}.





\bigskip

Next we present the construction for Theorem~\ref{THM:positive_lower_general_r}, which is a slight variation of the construction by Ellis--Ivan--Leader~\cite{EIL24}.

Let $q=t-1$ be a prime power, and let $X:=\mathbb F_q^r\setminus\{0\}$. Define an $r$-graph $\mathcal B_{r,q}$ on $X$ by declaring
\[
    \{x_1,\ldots,x_r\}\in \mathcal B_{r,q}
    \quad\Longleftrightarrow\quad
    x_1,\ldots,x_r
    \text{ are linearly independent over }\mathbb F_q .
\]
We first note that $\mathcal B_{r,q}$ has the stronger property that the link graph of every $(r-2)$-set is $(q+1)$-partite. Indeed, let $R$ be an $(r-2)$-set. If the vectors in $R$ are linearly dependent, then $L(R)$ is empty. Otherwise, let $W=\operatorname{span}(R)$. Then $\mathbb F_q^r/W$ has dimension $2$, and its one-dimensional subspaces give $q+1$ directions. Assign each vector $u\notin W$ to the direction spanned by $u+W$, and assign vectors in $W$ arbitrarily. Two vertices in the same direction cannot extend $R$ to a basis, so no link edge lies inside one part. Thus $L(R)$ is $(q+1)$-partite.

Taking $q=t-1$, every $(r-2)$-link is $t$-partite, and hence $\mathcal B_{r,q}$ is $\mathcal D_{r,t+1}$-free.

Now take a balanced blow-up of $\mathcal B_{r,q}$. Thus each vector $x\in X$ is replaced by a vertex class $V_x$ of size $N$, and we insert all $r$-sets crossing classes $V_{x_1},\ldots,V_{x_r}$ whenever $\{x_1,\ldots,x_r\}$ is an edge of $\mathcal B_{r,q}$. Denote the resulting $r$-graph by $\mathcal H_N$. Since $\mathcal D_{r,t+1}$ is covering, any copy of $\mathcal D_{r,t+1}$ in the blow-up would project to a copy of $\mathcal D_{r,t+1}$ in $\mathcal B_{r,q}$. Hence $\mathcal H_N$ is $\mathcal D_{r,t+1}$-free.

We compute its positive $(r-1)$-codegree. Let $A$ be an $(r-1)$-set of vertices of $\mathcal H_N$ with positive degree. Then $A$ must meet $r-1$ distinct vertex classes, say
\[
    V_{x_1},\ldots,V_{x_{r-1}},
\]
where $x_1,\ldots,x_{r-1}$ are linearly independent. A vertex in a class $V_y$ extends $A$ to an edge exactly when
\[
    y\notin \operatorname{span}(x_1,\ldots,x_{r-1}).
\]
The span has $q^{r-1}$ vectors, including $0$, and therefore the number of nonzero vectors outside it is
\[
    q^r-q^{r-1}.
\]
Each corresponding class has size $N$. Hence
\[
    d_{\mathcal H_N}(A)=(q^r-q^{r-1})N.
\]
All other $(r-1)$-sets have degree either $0$ or at least this value. Since
\[
    |V(\mathcal H_N)|=(q^r-1)N,
\]
we obtain
\[
    \frac{\delta_{r-1}^+(\mathcal H_N)}{|V(\mathcal H_N)|}
    =
    \frac{q^r-q^{r-1}}{q^r-1}.
\]
Letting $N\to\infty$ proves Theorem~\ref{THM:positive_lower_general_r}.

\section{Proof of Theorem~\ref{THM:daisy_Turan_upper_bound}}\label{SEC:MainProof}
This section proves Theorem~\ref{THM:daisy_Turan_upper_bound}.  The main point is a weighted averaging argument over the missing and bad degrees in the vertex links.  The proof is given for $r=3$; the reduction to larger uniformities is the standard link-averaging argument.

We need the following two standard tools.  The first is the extremal cloning lemma for blow-up-invariant forbidden hypergraphs, used also in~\cite{HLZ26DaisyOld}.

\begin{lemma}\label{LEM:degree_control}
    Let $t\ge2$, and let $\mathcal H$ be an extremal $\mathcal D_{3,t+1}$-free $3$-graph on $n$ vertices. Then
    \[
        \Delta(\mathcal H)-\delta(\mathcal H)\le n-2.
    \]
\end{lemma}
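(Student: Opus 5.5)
We argue by symmetrization (Zykov cloning). Suppose $\mathcal H$ is an extremal $\mathcal D_{3,t+1}$-free $3$-graph on $n$ vertices and that there are vertices $u,v$ with $d(u)-d(v)\ge n-1$. We build a new $3$-graph $\mathcal H'$ by deleting $v$ and inserting a clone $u'$ of $u$. Concretely, remove all edges containing $v$. Then, for every pair $xy\in L_{\mathcal H}(u)$ with $x,y\ne v$, add the edge $u'xy$, where $u'$ occupies the slot of $v$ as a vertex. We do not add the pair $u u'$ to any edge. This makes $u$ and $u'$ non-adjacent twins: no edge contains both of them.

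The edge count changes by
\[
    |\mathcal H'|-|\mathcal H| \;\ge\; \bigl(d(u)-d_{\mathcal H}(u,v)\bigr)-d(v)\;\ge\; d(u)-(n-2)-d(v)\;\ge\;1,
\]
because the codegree $d_{\mathcal H}(u,v)$ of the pair is at most $n-2$. Thus $|\mathcal H'|>|\mathcal H|$, and this contradicts extremality once we show that $\mathcal H'$ is still $\mathcal D_{3,t+1}$-free. That gives $\Delta(\mathcal H)-\delta(\mathcal H)\le n-2$.

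The main point is that $\mathcal H'$ is $\mathcal D_{3,t+1}$-free. The daisy $\mathcal D_{3,t+1}$ consists of a center $c$ together with a $K_{t+1}$ in its link. Suppose $\mathcal H'$ contains a copy of it. If the copy avoids $u'$, it already lies in $\mathcal H-v\subseteq\mathcal H$.

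If the copy uses $u'$, we distinguish two cases.
\begin{itemize}
\item Suppose $u'$ is the center. Then the $K_{t+1}$ lies in $L_{\mathcal H'}(u')\subseteq L_{\mathcal H}(u)$, so it gives a copy centered at $u$ in $\mathcal H$.
\item Suppose $u'$ is a petal vertex. Since no edge contains both $u$ and $u'$, the center is not $u$, and $u$ cannot be another petal adjacent to $u'$ in the link; so $u$ is not in the copy at all. Mapping $u'\mapsto u$ sends each edge $u'xy$ of the copy to $uxy\in\mathcal H$. This yields a copy of $\mathcal D_{3,t+1}$ in $\mathcal H$.
\end{itemize}
The same argument is simply the statement that $\mathcal D_{3,t+1}$ is blow-up invariant: it is covering, i.e. every pair of its vertices lies in a common edge. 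So any homomorphic image in $\mathcal H$ that identifies $u'$ with $u$ is injective, and hence is a genuine copy.

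The only delicate bookkeeping is the count above. Edges through $u$ that also contain $v$ cannot be cloned, and this loss is bounded by the codegree of $u$ and $v$, which is at most $n-2$. That is exactly why the conclusion reads $n-2$ rather than $0$. No earlier result of the paper is needed.
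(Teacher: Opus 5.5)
Your proof is correct and is the standard cloning (Zykov symmetrization) argument: replacing $v$ by a non-adjacent twin of $u$ gains $d(u)-d(v)-d_{\mathcal H}(u,v)\ge 1$ edges, and the result stays $\mathcal D_{3,t+1}$-free because the daisy is covering, hence blow-up invariant. This is exactly the ``extremal cloning lemma for blow-up-invariant forbidden hypergraphs'' that the paper invokes; the paper states the lemma without proof and cites earlier work of Hou--Liu--Zhang for it.
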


The second tool is F\"uredi's stability theorem for $K_{t+1}$-free graphs.

\begin{theorem}[\cite{Furedi15}]\label{THM:Furedi_linear_stability_new}
    Let $N\ge t\ge2$, and let $G$ be an $N$-vertex $K_{t+1}$-free graph with $e(T_{N,t})-p$ edges. Then $G$ can be made $t$-partite by deleting at most $p$ edges.
\end{theorem}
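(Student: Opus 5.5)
We will follow Füredi's greedy degree-peeling argument: build a $t$-partition of $V(G)$ by repeatedly choosing a vertex of maximum degree, then double-count edges to show that the number of edges inside the parts is at most $p$. Deleting those edges leaves a $t$-partite graph.

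\textbf{Construction of the partition.} Set $U_1:=V(G)$. For $i\ge1$, as long as $U_i$ spans at least one edge, we carry out the following step. Choose $x_i\in U_i$ of maximum degree in $G[U_i]$. Put $U_{i+1}:=N_G(x_i)\cap U_i$ and $V_i:=U_i\setminus U_{i+1}$. Once $G[U_k]$ is independent (including the case $U_k=\emptyset$), we set $V_k:=U_k$ and stop.

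By construction each $x_{i+1}$ lies in $N(x_1)\cap\cdots\cap N(x_i)$, so the chosen vertices $x_1,x_2,\dots$ form a clique. Since $G$ is $K_{t+1}$-free, the process chooses at most $t$ of them. In particular $U_{t+1}$ is empty whenever it is reached, because $x_1,\dots,x_t$ together with any vertex of $U_{t+1}$ would form a $K_{t+1}$. Hence we obtain a partition $V_1,\dots,V_k$ of $V(G)$ with $k\le t$, padding with empty parts if needed to get exactly $t$ parts. The last part $V_k$ is independent.

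\textbf{Key counting step.} We assign each edge $uv$ to the endpoint lying in the part of smaller index; if both endpoints lie in the same $V_i$, the edge is counted from both ends. An edge whose lower-indexed endpoint lies in $V_i$ has both endpoints in $U_i$, because $U_i\supseteq V_i\cup V_{i+1}\cup\cdots$. Writing $e_{\mathrm{in}}$ for the number of edges inside the parts, this gives
\[
 e(G)+e_{\mathrm{in}}\;\le\;\sum_{i}\sum_{v\in V_i}\deg_{G[U_i]}(v).
\]
By the maximality of $x_i$, each $v\in V_i$ satisfies $\deg_{G[U_i]}(v)\le \deg_{G[U_i]}(x_i)=|U_{i+1}|$. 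Moreover $|U_{i+1}|=\sum_{j>i}|V_j|$. Therefore
\[
 e(G)+e_{\mathrm{in}}\le\sum_i |V_i|\,|U_{i+1}|=\sum_{i<j}|V_i||V_j|\le e(T_{N,t}),
\]
where the last inequality holds because the complete $t$-partite graph with parts $V_1,\dots,V_t$ has at most as many edges as the Turán graph. Since $e(G)=e(T_{N,t})-p$, we conclude $e_{\mathrm{in}}\le p$.

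\textbf{Main obstacle.} The only delicate point is the bookkeeping in the counting step. One has to check two things. First, edges inside a part really are counted twice, which is what makes $e_{\mathrm{in}}$ appear on the left-hand side. Second, the degree bound must be taken inside $G[U_i]$ rather than in all of $G$, which is what makes the sum telescope into $\sum_{i<j}|V_i||V_j|$. Once the nested sets $U_i$ are set up as above, both checks are straightforward.
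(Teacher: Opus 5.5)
Your argument is correct, and it is essentially F\"uredi's original greedy maximum-degree peeling proof from the cited reference; the paper uses that theorem as a black box and does not reprove it. One bookkeeping point to tidy: the final independent part $V_k$ has no chosen vertex $x_k$, so there the bound $\deg_{G[U_k]}(v)\le |U_{k+1}|$ holds trivially with $U_{k+1}:=\emptyset$. Also, $U_t$ is automatically independent whenever it is reached, since an edge inside it together with $x_1,\dots,x_{t-1}$ would form a $K_{t+1}$. So $k\le t$ holds directly, without an empty $(t+1)$-st part that has to be dropped.
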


\begin{proof}[Proof of Theorem~\ref{THM:daisy_Turan_upper_bound}]
    We first reduce to the case $r=3$.  If an $(r+1)$-graph $\mathcal G$ is $\mathcal D_{r+1,t+1}$-free, then every vertex link of $\mathcal G$ is $\mathcal D_{r,t+1}$-free.  Hence
    \[
        |\mathcal G|
        =\frac1{r+1}\sum_{v\in V(\mathcal G)} |L_{\mathcal G}(v)|
        \le \frac{n}{r+1}\mathrm{ex}(n-1,\mathcal D_{r,t+1}).
    \]
    Dividing by $\binom n{r+1}$ and letting $n\to\infty$ gives
    \[
        \pi(\mathcal D_{r+1,t+1})\le \pi(\mathcal D_{r,t+1}).
    \]
    Thus it is enough to prove the theorem for $r=3$.

    The case $t=2$ follows from the known bound $\pi(K_4^{3-})<0.287$, which is stronger than $1/2-1/48$; see, for example,~\cite{BT11,FV13}.  We therefore assume throughout the proof that $t\ge3$.

    Put $\varepsilon:=\frac{1}{12t^2}$ and set $\theta:=10^{-4}$. Then
    \[
        (1+\theta)\frac{34}{36}<1,
    \]

    Suppose for contradiction that, for infinitely many $n$, there exists an $n$-vertex $\mathcal D_{3,t+1}$-free $3$-graph with at least $\left(1-\frac1t-\varepsilon\right)\binom n3$ edges.  Choose such a graph $\mathcal H$ extremal among all $\mathcal D_{3,t+1}$-free $3$-graphs on $n$ vertices.

    Since $\mathcal D_{3,t+1}$ is the $3$-suspension of $K_{t+1}$, every link $L(x)$ is $K_{t+1}$-free.  By Lemma~\ref{LEM:degree_control}, for all sufficiently large $n$ depending on $t$,
    \begin{align}\label{EQ:min_degree_main_revised}
        \delta(\mathcal H)
        &\ge \frac{3|\mathcal H|}{n}-n
        \ge
        \frac12\left(1-\frac1t-(1+\theta)\varepsilon\right)n^2 .
    \end{align}
    We shall assume from now on that $n$ is large enough for all lower-order terms in the proof to be absorbed into the displayed factors involving $1+\theta$.

    For each $x\in V(\mathcal H)$, fix a partition
    \[
        V(\mathcal H)\setminus\{x\}=V_1^x\cup\cdots\cup V_t^x
    \]
    maximizing $|L(x)[V_1^x,\ldots,V_t^x]|$. Define
    \[
        B^x:=L(x)\setminus L(x)[V_1^x,\ldots,V_t^x],
    \]
    and
    \[
        M^x:=K[V_1^x,\ldots,V_t^x]\setminus L(x).
    \]
    We call the edges in $B^x$ bad inside-part edges and the edges in $M^x$ missing cross-pairs.

    \begin{claim}\label{CLAIM:bad_missing_revised}
        For every $x\in V(\mathcal H)$,
        \[
            |B^x|\le \frac{1+\theta}{2}\varepsilon n^2
            \qquad\text{and}\qquad
            |M^x|\le (1+\theta)\varepsilon n^2 .
        \]
    \end{claim}

    \begin{proof}
        Since $L(x)$ is $K_{t+1}$-free, Theorem~\ref{THM:Furedi_linear_stability_new} applies to $L(x)$.  By~\eqref{EQ:min_degree_main_revised} and the trivial bound $e(T_{n-1,t})\le \frac12(1-\frac1t)n^2$, we have
        \[
            e(T_{n-1,t})-|L(x)|
            \le \frac{1+\theta}{2}\varepsilon n^2.
        \]
        F\"uredi's theorem says that $L(x)$ can be made $t$-partite by deleting at most this number of edges.  Since the chosen partition maximizes the number of cross-edges in the link, the number of inside-part link edges under this partition is at most the same quantity.  Therefore
        \[
            |B^x|\le \frac{1+\theta}{2}\varepsilon n^2.
        \]
        Moreover,
        \begin{align*}
            |M^x|
            = |K[V_1^x,\ldots,V_t^x]|-|L(x)[V_1^x,\ldots,V_t^x]|
            &\le e(T_{n-1,t})-\bigl(|L(x)|-|B^x|\bigr)\\
            &\le \frac{1+\theta}{2}\varepsilon n^2+\frac{1+\theta}{2}\varepsilon n^2
            =(1+\theta)\varepsilon n^2.
        \end{align*}
        This proves Claim~\ref{CLAIM:bad_missing_revised}.
    \end{proof}
    
    Next we record the part-size information that will be used in the weighted averaging. For a fixed vertex $x$, write
    \[
        x_i:=\frac{|V_i^x|}{n}\qquad (i\in[t]).
    \]

    \begin{claim}\label{CLAIM:L2_control_revised}
        For every $x\in V(\mathcal H)$,
        \[
            \sum_{i=1}^t\left(x_i-\frac1t\right)^2
            \le \frac{1+2\theta}{6t^2}.
        \]
    \end{claim}

    \begin{proof}
        We have
        \[
            |K[V_1^x,\ldots,V_t^x]|
            \ge |L(x)|-|B^x|.
        \]
        Using~\eqref{EQ:min_degree_main_revised} and Claim~\ref{CLAIM:bad_missing_revised},
        \[
            |K[V_1^x,\ldots,V_t^x]|
            \ge
            \frac12\left(1-\frac1t\right)n^2-(1+\theta)\varepsilon n^2.
        \]
        On the other hand,
        \[
            |K[V_1^x,\ldots,V_t^x]|
            =
            \frac12\left(\left(\sum_{i=1}^t x_i\right)^2-\sum_{i=1}^t x_i^2\right)n^2,
        \]
        and $\sum_i x_i=(n-1)/n$.  Since $n$ is sufficiently large, the terms coming from $(n-1)/n$ are absorbed into the additional $\theta\varepsilon n^2$ allowance, and we obtain
        \[
            \sum_{i=1}^t x_i^2
            \le \frac1t+2(1+2\theta)\varepsilon .
        \]
        Hence
        \[
            \sum_{i=1}^t\left(x_i-\frac1t\right)^2
            =
            \sum_{i=1}^t x_i^2-\frac{2}{t}\sum_{i=1}^t x_i+\frac1t
            \le 2(1+2\theta)\varepsilon
            =
            \frac{1+2\theta}{6t^2}.
        \]
        This proves Claim~\ref{CLAIM:L2_control_revised}.
    \end{proof}

    The following elementary consequence of Claim~\ref{CLAIM:L2_control_revised} will be used twice.

    \begin{claim}\label{CLAIM:elementary_size_bounds}
        For every fixed vertex $x$, with $x_i=|V_i^x|/n$, the following hold:
        \[
            \frac{(t-1)x_i}{1-x_i}\le \frac53
            \qquad\text{for all }i\in[t],
        \]
        and
        \[
            \sum_{i=1}^t \frac{x_i}{1-x_i}\le \frac53.
        \]
    \end{claim}

    \begin{proof}
        Let $b:=\max_i x_i$.  Since $\sum_i x_i=(n-1)/n$, Claim~\ref{CLAIM:L2_control_revised} implies, for $n$ sufficiently large, that
        \[
            b\le
            \frac1t+\sqrt{\frac{(1+3\theta)(t-1)}{6t^3}}.
        \]
        A straightforward calculation shows that, for every $t\ge3$ and $\theta=10^{-4}$,
        \[
            \frac1t+\sqrt{\frac{(1+3\theta)(t-1)}{6t^3}}
            \le \frac{5}{3t+2}.
        \]
        Therefore $x_i\le b\le 5/(3t+2)$ for every $i$, and so
        \[
            \frac{(t-1)x_i}{1-x_i}
            \le
            \frac{(t-1)\cdot \frac{5}{3t+2}}{1-\frac{5}{3t+2}}
            =
            \frac{5}{3}
        \]
        for every $i$.

        For the second inequality, use
        \[
            \frac{x_i}{1-x_i}
            =
            x_i+\frac{x_i^2}{1-x_i}
            \le
            x_i+\frac{x_i^2}{1-b}.
        \]
        Since $b\le 5/(3t+2)$, we have
        \[
            \frac1{1-b}\le \frac{3t+2}{3t-3}.
        \]
        Also, by Claim~\ref{CLAIM:L2_control_revised},
        \[
            \sum_i x_i^2\le \frac1t+\frac{1+2\theta}{6t^2}.
        \]
        Hence
        \[
            \sum_i\frac{x_i}{1-x_i}
            \le
            1+\left(\frac1t+\frac{1+2\theta}{6t^2}\right)\frac{3t+2}{3t-3}
            \le \frac53
        \]
        for every $t\ge3$ and $\theta=10^{-4}$.
    \end{proof}

    We now start the averaging argument.  Put $\Lambda:=\frac53$.  For $z\in V(\mathcal H)$, define
    \[
        \Phi(z):=\sum_{w\in V(\mathcal H)}
        \bigl(d_{M^w}(z)+\Lambda d_{B^w}(z)\bigr).
    \]
    By Claim~\ref{CLAIM:bad_missing_revised},
    \[
        \sum_{z\in V(\mathcal H)}\Phi(z)
        =
        2\sum_w |M^w|+2\Lambda\sum_w |B^w|
        \le
        (1+\theta)(2+\Lambda)\varepsilon n^3.
    \]
    Therefore, we can choose a vertex $x$ such that
    \begin{align}\label{EQ:choose_x_revised}
        \Phi(x)\le (1+\theta)(2+\Lambda)\varepsilon n^2.
    \end{align}
    Fix this vertex $x$ for the rest of the proof, and write
    \[
        V_j:=V_j^x,\qquad s_j:=|V_j|,\qquad x_j:=s_j/n.
    \]

    For each $j\in[t]$, define
    \[
        P_j:=\sum_{w\in V_j}d_{M^x}(w)+\sum_{w\in V_j}d_{B^w}(x),
    \]
    and
    \[
        Q_j:=\sum_{w\in V_j}d_{M^w}(x)+\sum_{w\in V_j}d_{B^x}(w).
    \]

    \begin{claim}\label{CLAIM:PQ_sum_revised}
        We have
        \[
            \sum_{j=1}^t(Q_j+\Lambda P_j)
            \le
            (1+\theta)(3\Lambda+3)\varepsilon n^2.
        \]
    \end{claim}

    \begin{proof}
        Expanding the left-hand side gives
        \begin{align*}
            \sum_j(Q_j+\Lambda P_j)
            &=
            2\Lambda |M^x|+2|B^x|
            +\sum_w d_{M^w}(x)+\Lambda\sum_w d_{B^w}(x).
        \end{align*}
        By Claim~\ref{CLAIM:bad_missing_revised} and~\eqref{EQ:choose_x_revised},
        \[
            2\Lambda |M^x|+2|B^x|
            \le (1+\theta)(2\Lambda+1)\varepsilon n^2
        \]
        and
        \[
            \sum_w d_{M^w}(x)+\Lambda\sum_w d_{B^w}(x)
            \le (1+\theta)(2+\Lambda)\varepsilon n^2.
        \]
        Adding these two bounds proves the claim.
    \end{proof}

    Next we choose a second part for each possible $j$.  Fix $j\in[t]$.  For $i\ne j$, define
    \[
        C_{i,j}:=\sum_{w\in V_j}\sum_{a\in V_i} d_{M^w}(a).
    \]
    By Claim~\ref{CLAIM:bad_missing_revised},
    \begin{align}\label{EQ:C_sum_revised}
        \sum_{i\ne j}C_{i,j}
        \le
        \sum_{w\in V_j}\sum_{a\in V(\mathcal H)}d_{M^w}(a)
        =
        2\sum_{w\in V_j}|M^w|
        \le
        2(1+\theta)\varepsilon n^2s_j.
    \end{align}
    Averaging over $i\ne j$ with weights $|V_i|$, and using
    \[
        \sum_{i\ne j}|V_i|=n-1-s_j,
    \]
    we find an index $i=i(j)\ne j$ such that
    \begin{align}\label{EQ:weighted_i_choice_revised}
        \frac{s_j}{|V_i|}P_j+\frac{C_{i,j}}{|V_i|}
        &\le
        \frac{(t-1)s_j}{n-1-s_j}P_j
        +
        \frac{2(1+\theta)\varepsilon n^2s_j}{n-1-s_j}.
    \end{align}
    Since $n$ is sufficiently large, the right-hand side is at most
    \[
        \frac{(t-1)x_j}{1-x_j}P_j
        +
        2(1+\theta)\varepsilon n^2\frac{x_j}{1-x_j}.
    \]
    By Claim~\ref{CLAIM:elementary_size_bounds},
    \[
        \frac{(t-1)x_j}{1-x_j}\le \Lambda.
    \]
    Hence
    \begin{align}\label{EQ:weighted_i_choice_final}
        Q_j+\frac{s_j}{|V_i|}P_j+\frac{C_{i,j}}{|V_i|}
        \le
        Q_j+\Lambda P_j
        +
        2(1+\theta)\varepsilon n^2\frac{x_j}{1-x_j}.
    \end{align}

    Summing~\eqref{EQ:weighted_i_choice_final} over $j$ and using Claims~\ref{CLAIM:PQ_sum_revised} and~\ref{CLAIM:elementary_size_bounds}, we obtain
    \begin{align}\label{EQ:error_total_revised}
        \sum_{j=1}^t
        \left(
            Q_j+\frac{s_j}{|V_{i(j)}|}P_j+\frac{C_{i(j),j}}{|V_{i(j)}|}
        \right)
        \le
        (1+\theta)K\varepsilon n^2,
    \end{align}
    where
    \[
        K:=3\Lambda+3+\frac{10}{3}=\frac{34}{3}.
    \]

    We now choose the part $B$ by averaging with weights $s_j^2$.  Since
    \[
        \sum_{j=1}^t s_j^2\ge \frac{(n-1)^2}{t},
    \]
    it follows from~\eqref{EQ:error_total_revised} that there is some $j\in[t]$ such that, writing $i=i(j)$, $A:=V_i$, $B:=V_j$, $r:=|A|$, and $s:=|B|$, we have
    \begin{align}\label{EQ:chosen_ij_error_revised}
        Q_j+\frac{s}{r}P_j+\frac{C_{i,j}}{r}
        \le
        (1+\theta)K\varepsilon n^2\cdot \frac{s^2}{\sum_\ell s_\ell^2}
        \le
        (1+\theta)K\varepsilon t s^2.
    \end{align}

    For each $w\in B$, let $P(w)$ be the part of the chosen partition $V_1^w\cup\cdots\cup V_t^w$ that contains $x$.

    \begin{claim}\label{CLAIM:local_one_revised}
        For every $w\in B$,
        \[
            |P(w)\cap A|\le d_{M^x}(w)+d_{B^w}(x).
        \]
    \end{claim}

    \begin{proof}
        Let $a\in P(w)\cap A$.  Since $a\in A=V_i^x$, $w\in B=V_j^x$, and $i\ne j$, the pair $aw$ is a cross-pair in the $x$-partition.  If $xaw\notin\mathcal H$, then $aw\in M^x$.  If $xaw\in\mathcal H$, then $xa$ is an edge of $L(w)$ whose two endpoints lie in the same part $P(w)$, and hence $xa\in B^w$.  This gives an injection from $P(w)\cap A$ into the union of the two sets counted by $d_{M^x}(w)$ and $d_{B^w}(x)$, proving the claim.
    \end{proof}

    \begin{claim}\label{CLAIM:local_two_revised}
        For every $w\in B$,
        \[
            |B\setminus P(w)|\le d_{M^w}(x)+d_{B^x}(w)+1.
        \]
    \end{claim}

    \begin{proof}
        Let $b\in B\setminus(P(w)\cup\{w\})$.  Since $x\in P(w)$ and $b\notin P(w)$, the pair $xb$ is a cross-pair in the $w$-partition.  If $xbw\notin\mathcal H$, then $xb\in M^w$.  If $xbw\in\mathcal H$, then $bw$ is an edge of $L(x)$ whose two endpoints both lie in $B=V_j^x$, so $bw\in B^x$.  The additional $+1$ accounts for the possible vertex $w$ itself.
    \end{proof}

    For $a\in A$, define
    \[
        T(a):=\sum_{\substack{w\in B\\ a\notin P(w)}} |N_{L(w)}(a)\cap B|.
    \]
    We estimate the average value of $T(a)$.  Fix $w\in B$ and $a\in A\setminus P(w)$. If $b\in B\cap P(w)$, then $ab$ is a cross-pair in the $w$-partition.  Therefore the only reason for such a vertex $b$ not to lie in $N_{L(w)}(a)$ is that $ab\in M^w$. Hence
    \[
        |N_{L(w)}(a)\cap B|
        \ge |B\cap P(w)|-d_{M^w}(a).
    \]
    By Claim~\ref{CLAIM:local_two_revised},
    \[
        |B\cap P(w)|\ge s-1-d_{M^w}(x)-d_{B^x}(w).
    \]
    Thus
    \begin{align}\label{EQ:pointwise_T_revised}
        |N_{L(w)}(a)\cap B|
        \ge s-1-d_{M^w}(x)-d_{B^x}(w)-d_{M^w}(a).
    \end{align}

    Let
    \[
        r_w:=|A\cap P(w)|.
    \]
    By Claim~\ref{CLAIM:local_one_revised},
    \[
        r_w\le d_{M^x}(w)+d_{B^w}(x).
    \]
    Summing~\eqref{EQ:pointwise_T_revised} over all pairs $(w,a)$ with $w\in B$ and $a\in A\setminus P(w)$ gives
    \begin{align*}
        \sum_{a\in A}T(a)
        &\ge
        \sum_{w\in B}(r-r_w)
        \bigl(s-1-d_{M^w}(x)-d_{B^x}(w)\bigr)
        -
        \sum_{w\in B}\sum_{a\in A}d_{M^w}(a)\\
        &\ge
        rs(s-1)
        -s\sum_{w\in B}r_w
        -r\sum_{w\in B}\bigl(d_{M^w}(x)+d_{B^x}(w)\bigr)
        -C_{i,j}\\
        &\ge
        rs(s-1)-sP_j-rQ_j-C_{i,j}.
    \end{align*}
    Therefore there exists $a_0\in A$ such that
    \[
        T(a_0)
        \ge
        s(s-1)-\frac{s}{r}P_j-Q_j-\frac{C_{i,j}}{r}.
    \]
    Using~\eqref{EQ:chosen_ij_error_revised}, we get
    \begin{align}\label{EQ:T_lower_revised}
        T(a_0)
        \ge
        s^2-s-(1+\theta)K\varepsilon t s^2.
    \end{align}

    Every contribution to $T(a_0)$ is an ordered pair $(w,b)\in B^2$ such that $a_0bw\in\mathcal H$.  Therefore $bw$ is an edge of $L(a_0)[B]$, and each unordered edge is counted at most twice.  Hence
    \begin{align}\label{EQ:link_lower_revised}
        |L(a_0)[B]|
        \ge
        \frac12\left(s^2-s-(1+\theta)K\varepsilon t s^2\right).
    \end{align}
    On the other hand, $L(a_0)$ is $K_{t+1}$-free.  By Tur\'an's theorem,
    \begin{align}\label{EQ:link_upper_revised}
        |L(a_0)[B]|
        \le
        \left(1-\frac1t\right)\frac{s^2}{2}.
    \end{align}
    Comparing~\eqref{EQ:link_lower_revised} and~\eqref{EQ:link_upper_revised}, we obtain
    \begin{align}\label{EQ:final_compare_revised}
        (1+\theta)K\varepsilon t s^2\ge \frac{s^2}{t}-s.
    \end{align}

    Since Claim~\ref{CLAIM:L2_control_revised} implies $s\ge c_t n$ for some positive constant $c_t$ depending only on $t$, the term $s$ is smaller than $\frac12\left(1-(1+\theta)K/12\right)s^2/t$ for all sufficiently large $n$. But $\varepsilon=1/(12t^2)$, and therefore
    \[
        (1+\theta)K\varepsilon t
        =
        \frac{(1+\theta)K}{12t}
        <
        \frac1t
    \]
    by the choice of $\theta$.  This contradicts~\eqref{EQ:final_compare_revised}.  The proof of Theorem~\ref{THM:daisy_Turan_upper_bound} is complete.
\end{proof}

\section{Proof of Theorem~\ref{THM:positive_upper}}\label{SEC:PositiveCodegree}
In this section we prove Theorem~\ref{THM:positive_upper}. We use the following classical theorem of Andr\'asfai--Erd\H{o}s--S\'os.

\begin{theorem}[\cite{AES74}]\label{THM:AES}
    Suppose that $G$ is a $K_{t+1}$-free graph on $m$ vertices with $\delta(G)>\frac{3t-4}{3t-1}m$. Then $G$ is $t$-partite.
\end{theorem}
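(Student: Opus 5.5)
Since Theorem~\ref{THM:AES} is a classical result that we only cite, I would reprove it by reducing to the triangle-free case $t=2$ and then globalizing. \textbf{Base case $t=2$.} Suppose $G$ is triangle-free with $\delta(G)>\frac25 m$ but not bipartite, and let $C$ be a shortest odd cycle, of length $2k+1\ge5$. By minimality, $C$ is chordless. Moreover, every vertex outside $C$ has at most two neighbours on $C$: three or more would either give a triangle or split $C$ into a shorter odd cycle. Summing degrees over $V(C)$ gives
\[
    (2k+1)\cdot\tfrac25 m<\sum_{v\in C}d(v)\le 2(2k+1)+2(m-2k-1)=2m,
\]
which is impossible because $k\ge2$.

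\textbf{Reduction for general $t$.} First I would pass to an edge-maximal $K_{t+1}$-free supergraph $G'\supseteq G$. The degree condition is preserved, and it suffices to show that $G'$ is $t$-partite. In $G'$, any two non-adjacent vertices $u,v$ have a $K_{t-1}$ inside $N(u)\cap N(v)$. My aim is to show that non-adjacency is transitive in $G'$. That makes $G'$ complete multipartite, and $K_{t+1}$-freeness then forces at most $t$ parts.

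The quantitative tool is a common-neighbourhood computation. Write $\delta=\alpha m$ with $1-\alpha<\frac{3}{3t-1}$. For a clique $S$ with $|S|=t-2$, the common neighbourhood $W=N(S)$ satisfies
\[
    |W|\ge m-(t-2)(1-\alpha)m>\tfrac{5}{3t-1}m .
\]
The graph $G'[W]$ is triangle-free, and each vertex has degree at least $|W|-(t-1)(1-\alpha)m$ inside $W$. One checks that this exceeds $\frac25|W|$, which is exactly where the constant $\frac{3t-4}{3t-1}$ is calibrated. By the base case, $G'[W]$ is bipartite. As a sanity check, the same computation applied to a single neighbourhood $N(v)$ shows that $G'[N(v)]$ is $K_t$-free with minimum degree above $\frac{3(t-1)-4}{3(t-1)-1}|N(v)|$. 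So an induction on $t$ could alternatively be run through vertex neighbourhoods.

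\textbf{Main obstacle: transitivity.} Suppose $u\not\sim v$ and $v\not\sim w$ but $u\sim w$. The plan is to build a clique $S$ of size $t-2$ in the common neighbourhood of $u,v,w$, using the $K_{t-1}$'s supplied by maximality together with the degree bound. Then I would show that $u,v,w$ together with suitable witnesses inside $N(S)$ produce either a $K_{t+1}$ or an odd cycle in the triangle-free graph $G'[N(S)]$. If that graph is shown to contain a short odd cycle, the $t=2$ counting argument above gives the contradiction. I expect the hardest part to be guaranteeing that $S$ can be chosen inside $N(u)\cap N(v)\cap N(w)$ while keeping the degree lower bounds. If this direct route fails, I would fall back on the neighbourhood induction: each $G'[N(v)]$ is $(t-1)$-partite, and I would try to glue these local partitions via the twin structure of the maximal graph.
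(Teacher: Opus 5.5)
The paper gives no proof of this statement; it simply quotes it from \cite{AES74}. Your proposal correctly handles $t=2$ and the local computation, but for $t\ge3$ it stops exactly where the content of the theorem lies, so there is a genuine gap.

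\emph{What is correct.} The shortest-odd-cycle count settles $t=2$. For every $(t-2)$-clique $S$ of $G'$, the graph $G'[N(S)]$ is triangle-free with minimum degree $>\frac25|N(S)|$, hence bipartite. There is one slip here. A vertex of $W=N(S)$ has at most $(1-\alpha)m$ non-neighbours in total, so its degree inside $W$ is at least $|W|-(1-\alpha)m$. With the factor $t-1$ you wrote, the inequality $>\frac25|W|$ does not follow for $t\ge3$; with the correct bound it reduces exactly to $1-\alpha<\frac{3}{3t-1}$.

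\emph{Why local information is not enough.} Information of this local type cannot suffice on its own. The Gr\"otzsch graph is $K_4$-free and every neighbourhood in it is independent, hence bipartite, yet it is not $3$-colourable. So the gluing step is where the hypotheses must do real work, and your proposal does not show how they do it. The transitivity step is only announced.

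\emph{Where the transitivity sketch breaks.} It fails in two places.
\begin{enumerate}
\item Nothing guarantees a $(t-2)$-clique $S\subseteq N(u)\cap N(v)\cap N(w)$. Greedy counting only shows that $k$ vertices have at least $(1-k(1-\alpha))m$ common neighbours. Building $S$ needs this for $k=t$, and $1-\alpha$ may be as large as $\frac{3}{3t-1}>\frac1t$, which makes the bound vacuous.
\item Even given such an $S$, the pattern $u\sim w$, $v\not\sim u$, $v\not\sim w$ inside $G'[N(S)]$ is perfectly consistent with bipartiteness. You would need the maximality witnesses of both non-edges to extend the same $S$, i.e.\ $(t-1)$-cliques $S\cup\{a\}\subseteq N(u)\cap N(v)$ and $S\cup\{b\}\subseteq N(v)\cap N(w)$. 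Then $a\ne b$ (otherwise $S\cup\{a,u,w\}$ is a $K_{t+1}$), and $uwbva$ is a $5$-cycle in $G'[N(S)]$, giving the contradiction. Maximality only supplies some $K_{t-1}$'s $A\subseteq N(u)\cap N(v)$ and $B\subseteq N(v)\cap N(w)$, with no control over $A\cap B$. Producing $a$ would again require a common neighbour of $t$ prescribed vertices.
\end{enumerate}
Your fallback, gluing the $(t-1)$-partitions of the vertex neighbourhoods, is not automatic either, by the same Gr\"otzsch example.

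As written, you have a proof for $t=2$ only. To finish, you need a genuinely global lemma. One example would be that some vertex $v$ of the edge-maximal $G'$ has an independent non-neighbourhood $V(G')\setminus N(v)$. Your computation for $G'[N(v)]$ plus induction on $t$ would then complete the proof, but proving such a lemma is precisely the nontrivial part. Otherwise, do as the paper does and cite \cite{AES74}.
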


\begin{proof}[Proof of Theorem~\ref{THM:positive_upper}]
    We first record the standard reduction. Let $r\ge3$, and let $\mathcal G$ be an $(r+1)$-graph on $n$ vertices which is $\mathcal D_{r+1,t+1}$-free and satisfies
    \[
        \delta_r^+(\mathcal G)=\alpha n.
    \]
    Choose a vertex $v$ of positive degree and consider the $r$-graph
    \[
        \mathcal L:=L_{\mathcal G}(v).
    \]
    Then $\mathcal L$ is $\mathcal D_{r,t+1}$-free. Indeed, a copy of $\mathcal D_{r,t+1}$ in $\mathcal L$ would extend with $v$ to a copy of $\mathcal D_{r+1,t+1}$ in $\mathcal G$. Moreover, if an $(r-1)$-set $S$ has positive degree in $\mathcal L$, then $S\cup\{v\}$ has positive degree in $\mathcal G$, and hence
    \[
        d_{\mathcal L}(S)=d_{\mathcal G}(S\cup\{v\})\ge \alpha n.
    \]
    Thus
    \[
        \delta_{r-1}^+(\mathcal L)\ge \alpha n
        =
        (\alpha+o(1))|V(\mathcal L)|.
    \]
    It follows that
    \[
        \gamma_r^+(\mathcal D_{r+1,t+1})
        \le
        \gamma_{r-1}^+(\mathcal D_{r,t+1}).
    \]
    Iterating this inequality, it suffices to prove Theorem~\ref{THM:positive_upper} for the case $r=3$.

    Put $s:=1-\frac1t$ and let $\mathcal H$ be an $n$-vertex $\mathcal D_{3,t+1}$-free $3$-graph with $\delta_2^+(\mathcal H)=\alpha n$. We prove that
    \[
        \alpha\le s-\frac{1}{36t^2}+o(1).
    \]

    Suppose instead that
    \[
        \alpha>s-\frac{1}{36t^2}.
    \]
    Since
    \[
        s-\frac{3t-4}{3t-1}=\frac{1}{t(3t-1)}>\frac{1}{36t^2}
    \]
    for every $t\ge2$, we have
    \[
        \alpha>\frac{3t-4}{3t-1}.
    \]

    Fix a vertex $v$ with positive degree, and let
    \[
        U_v:=\{u\in V(\mathcal H): d_{\mathcal H}(uv)>0\}.
    \]
    The graph $L(v)[U_v]$ is $K_{t+1}$-free, and every vertex of it has degree at least $\alpha n$. Since $|U_v|\le n$, Theorem~\ref{THM:AES} implies that $L(v)[U_v]$ is $t$-partite. Adding isolated vertices to arbitrary parts, we conclude that every link $L(v)$ is $t$-partite.

    By Theorem~\ref{THM:t_partite_upper},
    \begin{align}\label{EQ:edge_upper_from_partite}
        |\mathcal H|
        \le \left(s-\frac{1}{12t^2}+o(1)\right)\binom n3.
    \end{align}

    We now derive a lower bound on $|\mathcal H|$ from positive codegree. If $v$ is non-isolated, then $L(v)[U_v]$ is $K_{t+1}$-free and has minimum degree at least $\alpha n$. Since a $t$-partite graph on $m$ vertices has minimum degree at most $sm$, we have
    \[
        |U_v|\ge \frac{\alpha}{s}n.
    \]
    Consequently,
    \[
        |L(v)|\ge \frac12 |U_v|\alpha n
        \ge \frac{\alpha^2}{2s}n^2.
    \]
    Moreover, any non-isolated vertex $v$ has at least $|U_v|\ge (\alpha/s)n$ neighbours in the shadow, and these vertices are also non-isolated. Thus $\mathcal H$ has at least $(\alpha/s)n$ non-isolated vertices. Hence
    \[
        3|\mathcal H|=\sum_{v\in V(\mathcal H)}|L(v)|
        \ge \frac{\alpha}{s}n\cdot \frac{\alpha^2}{2s}n^2
        =\frac{\alpha^3}{2s^2}n^3.
    \]
    Dividing by $\binom n3$, we get
    \begin{align}\label{EQ:edge_lower_from_codeg}
        \frac{|\mathcal H|}{\binom n3}\ge \frac{\alpha^3}{s^2}-o(1).
    \end{align}

    Let $x:=\frac{1}{36t^2}$. Since $\alpha>s-x$,
    \[
        \frac{\alpha^3}{s^2}>\frac{(s-x)^3}{s^2}
        =s-3x+\frac{3x^2}{s}-\frac{x^3}{s^2}.
    \]
    But $3x=1/(12t^2)$ and
    \[
        \frac{3x^2}{s}-\frac{x^3}{s^2}=\frac{x^2}{s^2}(3s-x)>0.
    \]
    Therefore
    \[
        \frac{\alpha^3}{s^2}>s-\frac{1}{12t^2}.
    \]
    This contradicts~\eqref{EQ:edge_upper_from_partite} and~\eqref{EQ:edge_lower_from_codeg}. Hence
    \[
        \gamma_2^+(\mathcal D_{3,t+1})\le 1-\frac1t-\frac{1}{36t^2}.
    \]
    This proves the case $r=3$, and the general case follows from the reduction above.
\end{proof}

\section*{Acknowledgments}
J.H. was supported by the National Key R\&D Program of China (No.~2023YFA1010202) and by the Central Guidance on Local Science and Technology Development Fund of Fujian Province (No.~2023L3003). 
X.H. was supported by the National Key Research and Development Program of China (2023YFA1010203), the National Natural Science Foundation of China (12471336), and Quantum Science
and Technology-National Science and Technology Major Project (2021ZD0302902).
X.L. was supported by the Excellent Young Talents Program (Overseas) of the National Natural Science Foundation of China.

\section*{Declaration on the use of AI}
The authors used generative AI tools to assist with discussing proof strategies, proof checking, and exposition. All mathematical arguments, results, and conclusions were reviewed and verified by the authors.
\bibliographystyle{alpha}
\bibliography{daisy}

@article {Pik23,
    AUTHOR = {Pikhurko, Oleg},
     TITLE = {On the limit of the positive {$\ell$}-degree {T}ur\'an problem},
   JOURNAL = {Electron. J. Combin.},
  FJOURNAL = {Electronic Journal of Combinatorics},
    VOLUME = {30},
      YEAR = {2023},
    NUMBER = {3},
     PAGES = {Paper No. 3.25, 15},
      ISSN = {1077-8926},
   MRCLASS = {05D05 (05C65)},
  MRNUMBER = {4635475},
MRREVIEWER = {Shao-qiang\ Liu},
       DOI = {10.37236/11912},
       URL = {https://doi.org/10.37236/11912},
}

@article {FV13,
    AUTHOR = {Falgas-Ravry, Victor and Vaughan, Emil R.},
     TITLE = {{A}pplications of the semi-definite method to the {T}ur{\'{a}}n density problem for $3$-graphs},
   JOURNAL = {Combin. Probab. Comput.},
  FJOURNAL = {Combinatorics, Probability and Computing},
    VOLUME = {22},
      YEAR = {2013},
    NUMBER = {1},
     PAGES = {21--54},
      ISSN = {0963-5483,1469-2163},
   MRCLASS = {05D05 (05C65)},
  MRNUMBER = {3002572},
       DOI = {10.1017/S0963548312000508},
       URL = {https://doi.org/10.1017/S0963548312000508},
}

@article {JDM60,
    AUTHOR = {Johnson, Diane M. and Dulmage, A. L. and Mendelsohn, N. S.},
     TITLE = {On an algorithm of {G}. {B}irkhoff concerning doubly stochastic matrices},
   JOURNAL = {Canad. Math. Bull.},
  FJOURNAL = {Canadian Mathematical Bulletin. Bulletin Canadien de  Math\'ematiques},
    VOLUME = {3},
      YEAR = {1960},
     PAGES = {237--242},
      ISSN = {0008-4395,1496-4287},
   MRCLASS = {15.65},
  MRNUMBER = {130267},
MRREVIEWER = {Marvin\ Marcus},
       DOI = {10.4153/CMB-1960-029-5},
       URL = {https://doi.org/10.4153/CMB-1960-029-5},
}

@article{Cra36,
  title={On the order of magnitude of the difference between consecutive prime numbers},
  author={Cram{\'e}r, Harald},
  journal={Acta arithmetica},
  volume={2},
  pages={23--46},
  year={1936},
  publisher={Instytut Matematyczny Polskiej Akademii Nauk}
}

@article {Bir46,
    AUTHOR = {Birkhoff, Garrett},
     TITLE = {Three observations on linear algebra},
   JOURNAL = {Univ. Nac. Tucum\'an. Revista A.},
  FJOURNAL = {Univ. Nac. Tucum\'an. Revista A.},
    VOLUME = {5},
      YEAR = {1946},
     PAGES = {147--151},
   MRCLASS = {09.1X},
  MRNUMBER = {20547},
MRREVIEWER = {J.\ L.\ Dorroh},
}

@article {AKS07,
    AUTHOR = {Alon, Noga and Krech, Anja and Szab{\'o}, Tibor},
     TITLE = {Tur{\'a}n's theorem in the hypercube},
   JOURNAL = {SIAM J. Discrete Math.},
  FJOURNAL = {SIAM Journal on Discrete Mathematics},
    VOLUME = {21},
      YEAR = {2007},
    NUMBER = {1},
     PAGES = {66--72},
      ISSN = {0895-4801,1095-7146},
   MRCLASS = {05D05 (05C35 05C55 05D10)},
  MRNUMBER = {2299695},
MRREVIEWER = {Yi\ Zhao},
       DOI = {10.1137/060649422},
       URL = {https://doi.org/10.1137/060649422},
}

@incollection {GL16,
    AUTHOR = {Griggs, Jerrold R. and Li, Wei-Tian},
     TITLE = {Progress on poset-free families of subsets},
 BOOKTITLE = {Recent trends in combinatorics},
    SERIES = {IMA Vol. Math. Appl.},
    VOLUME = {159},
     PAGES = {317--338},
 PUBLISHER = {Springer, [Cham]},
      YEAR = {2016},
      ISBN = {978-3-319-24296-5; 978-3-319-24298-9},
   MRCLASS = {05-02 (05D05 06A07)},
  MRNUMBER = {3526415},
       DOI = {10.1007/978-3-319-24298-9\_14},
       URL = {https://doi.org/10.1007/978-3-319-24298-9_14},
}

@article {BLM11,
    AUTHOR = {Bollob{\'a}s, B{\'e}la and Leader, Imre and Malvenuto, Claudia},
     TITLE = {Daisies and other {T}ur{\'a}n problems},
   JOURNAL = {Combin. Probab. Comput.},
  FJOURNAL = {Combinatorics, Probability and Computing},
    VOLUME = {20},
      YEAR = {2011},
    NUMBER = {5},
     PAGES = {743--747},
      ISSN = {0963-5483,1469-2163},
   MRCLASS = {05D05 (05C35 05C65)},
  MRNUMBER = {2825587},
MRREVIEWER = {Ivan\ Pashov},
       DOI = {10.1017/S0963548311000319},
       URL = {https://doi.org/10.1017/S0963548311000319},
}

@article {EIL24,
    AUTHOR = {Ellis, David and Ivan, Maria-Romina and Leader, Imre},
     TITLE = {Tur{\'a}n densities for daisies and hypercubes},
   JOURNAL = {Bull. Lond. Math. Soc.},
  FJOURNAL = {Bulletin of the London Mathematical Society},
    VOLUME = {56},
      YEAR = {2024},
    NUMBER = {12},
     PAGES = {3838--3853},
      ISSN = {0024-6093,1469-2120},
   MRCLASS = {05D05},
  MRNUMBER = {4835279},
MRREVIEWER = {William\ Linz},
       DOI = {10.1112/blms.13171},
       URL = {https://doi.org/10.1112/blms.13171},
}

@article {EIL25,
    AUTHOR = {Ellis, David and Ivan, Maria-Romina and Leader, Imre},
     TITLE = {{T}ur{\'a}n densities for small hypercubes},
   JOURNAL = {SIAM J. Discrete Math.},
  FJOURNAL = {SIAM Journal on Discrete Mathematics},
    VOLUME = {39},
      YEAR = {2025},
    NUMBER = {2},
     PAGES = {1363--1371},
      ISSN = {0895-4801,1095-7146},
   MRCLASS = {05C65},
  MRNUMBER = {4922383},
MRREVIEWER = {J\'ozsef\ Balogh},
       DOI = {10.1137/24M1710413},
       URL = {https://doi.org/10.1137/24M1710413},
}

@article {JT10,
    AUTHOR = {Johnson, J. Robert and Talbot, John},
     TITLE = {{V}ertex {T}ur{\'a}n problems in the hypercube},
   JOURNAL = {J. Combin. Theory Ser. A},
  FJOURNAL = {Journal of Combinatorial Theory. Series A},
    VOLUME = {117},
      YEAR = {2010},
    NUMBER = {4},
     PAGES = {454--465},
      ISSN = {0097-3165,1096-0899},
   MRCLASS = {05C35},
  MRNUMBER = {2592894},
MRREVIEWER = {Maria\ A.\ Axenovich},
       DOI = {10.1016/j.jcta.2009.07.004},
       URL = {https://doi.org/10.1016/j.jcta.2009.07.004},
}

@article {Furedi15,
    AUTHOR = {F{\"u}redi, Zolt{\'a}n},
     TITLE = {A proof of the stability of extremal graphs, {S}imonovits'   stability from {S}zemer{\'e}di's regularity},
   JOURNAL = {J. Combin. Theory Ser. B},
  FJOURNAL = {Journal of Combinatorial Theory. Series B},
    VOLUME = {115},
      YEAR = {2015},
     PAGES = {66--71},
      ISSN = {0095-8956,1096-0902},
   MRCLASS = {05C35},
  MRNUMBER = {3383250},
MRREVIEWER = {Yi\ Zhao},
       DOI = {10.1016/j.jctb.2015.05.001},
       URL = {https://doi.org/10.1016/j.jctb.2015.05.001},
}

@article {BT11,
    AUTHOR = {Baber, Rahil and Talbot, John},
     TITLE = {{H}ypergraphs do jump},
   JOURNAL = {Combin. Probab. Comput.},
  FJOURNAL = {Combinatorics, Probability and Computing},
    VOLUME = {20},
      YEAR = {2011},
    NUMBER = {2},
     PAGES = {161--171},
      ISSN = {0963-5483,1469-2163},
   MRCLASS = {05C65 (05C35)},
  MRNUMBER = {2769186},
MRREVIEWER = {Yi\ Zhao},
       DOI = {10.1017/S0963548310000222},
       URL = {https://doi.org/10.1017/S0963548310000222},
}

@article {Mu03,
    AUTHOR = {Mubayi, Dhruv},
     TITLE = {{O}n hypergraphs with every four points spanning at most two triples},
   JOURNAL = {Electron. J. Combin.},
  FJOURNAL = {Electronic Journal of Combinatorics},
    VOLUME = {10},
      YEAR = {2003},
     PAGES = {Note 10, 4},
      ISSN = {1077-8926},
   MRCLASS = {05C35 (05C65 05D05)},
  MRNUMBER = {2014538},
       DOI = {10.37236/1750},
       URL = {https://doi.org/10.37236/1750},
}

@article {Cae83,
    AUTHOR = {de Caen, D.},
     TITLE = {Extension of a theorem of {M}oon and {M}oser on complete  subgraphs},
   JOURNAL = {Ars Combin.},
  FJOURNAL = {Ars Combinatoria},
    VOLUME = {16},
      YEAR = {1983},
     PAGES = {5--10},
      ISSN = {0381-7032},
   MRCLASS = {05C30},
  MRNUMBER = {734038},
MRREVIEWER = {E.\ M.\ Palmer},
}

@article {Tur41,
    AUTHOR = {Tur{\'a}n, Paul},
     TITLE = {Eine {E}xtremalaufgabe aus der {G}raphentheorie},
   JOURNAL = {Mat. Fiz. Lapok},
  FJOURNAL = {Matematikai {\'{e}}s Fizikai Lapok},
    VOLUME = {48},
      YEAR = {1941},
     PAGES = {436--452},
      ISSN = {0302-7317},
   MRCLASS = {56.0X},
  MRNUMBER = {18405},
MRREVIEWER = {P.\ Erd{\H{o}}s},
}

@article {ES46,
    AUTHOR = {Erd{\H o}s, P. and Stone, A. H.},
     TITLE = {On the structure of linear graphs},
   JOURNAL = {Bull. Amer. Math. Soc.},
  FJOURNAL = {Bulletin of the American Mathematical Society},
    VOLUME = {52},
      YEAR = {1946},
     PAGES = {1087--1091},
      ISSN = {0002-9904},
   MRCLASS = {56.0X},
  MRNUMBER = {18807},
MRREVIEWER = {H.\ S. M. Coxeter},
       DOI = {10.1090/S0002-9904-1946-08715-7},
       URL = {https://doi.org/10.1090/S0002-9904-1946-08715-7},
}

@article {ES66,
    AUTHOR = {Erd{\H o}s, P. and Simonovits, M.},
     TITLE = {A limit theorem in graph theory},
   JOURNAL = {Studia Sci. Math. Hungar.},
  FJOURNAL = {Studia Scientiarum Mathematicarum Hungarica. Combinatorics,
              Geometry and Topology (CoGeTo)},
    VOLUME = {1},
      YEAR = {1966},
     PAGES = {51--57},
      ISSN = {0081-6906,1588-2896},
   MRCLASS = {05.40},
  MRNUMBER = {205876},
MRREVIEWER = {W.\ Moser},
}

@incollection {Kee11,
    AUTHOR = {Keevash, Peter},
     TITLE = {Hypergraph {T}ur{\'a}n problems},
 BOOKTITLE = {Surveys in combinatorics 2011},
    SERIES = {London Math. Soc. Lecture Note Ser.},
    VOLUME = {392},
     PAGES = {83--139},
 PUBLISHER = {Cambridge Univ. Press, Cambridge},
      YEAR = {2011},
      ISBN = {978-1-107-60109-3},
   MRCLASS = {05-02 (05C65)},
  MRNUMBER = {2866732},
}

@article {FF84,
    AUTHOR = {Frankl, P. and F{\"u}redi, Z.},
     TITLE = {An exact result for {$3$}-graphs},
   JOURNAL = {Discrete Math.},
  FJOURNAL = {Discrete Mathematics},
    VOLUME = {50},
      YEAR = {1984},
    NUMBER = {2-3},
     PAGES = {323--328},
      ISSN = {0012-365X,1872-681X},
   MRCLASS = {05C35 (05C65)},
  MRNUMBER = {753720},
MRREVIEWER = {Ralph\ Faudree},
       DOI = {10.1016/0012-365X(84)90058-X},
       URL = {https://doi.org/10.1016/0012-365X(84)90058-X},
}

@article {AES74,
    AUTHOR = {Andr\'{a}sfai, B. and Erd\H{o}s, P. and S\'{o}s, V. T.},
     TITLE = {On the connection between chromatic number, maximal clique and minimal degree of a graph},
   JOURNAL = {Discrete Math.},
  FJOURNAL = {Discrete Mathematics},
    VOLUME = {8},
      YEAR = {1974},
     PAGES = {205--218},
      ISSN = {0012-365X,1872-681X},
   MRCLASS = {05C15},
  MRNUMBER = {340075},
MRREVIEWER = {D.\ J.\ Kleitman},
       DOI = {10.1016/0012-365X(74)90133-2},
       URL = {https://doi.org/10.1016/0012-365X(74)90133-2},
}

@inproceedings{MPS11,
  title={{H}ypergraph {T}ur{\'a}n problem: some open questions},
  author={Mubayi, Dhruv and Pikhurko, Oleg and Sudakov, Benny},
  booktitle={AIM workshop problem lists, manuscript},
  pages={166},
  year={2011}
}

@article {BHP01,
    AUTHOR = {Baker, R. C. and Harman, G. and Pintz, J.},
     TITLE = {The difference between consecutive primes. {II}},
   JOURNAL = {Proc. London Math. Soc. (3)},
  FJOURNAL = {Proceedings of the London Mathematical Society. Third Series},
    VOLUME = {83},
      YEAR = {2001},
    NUMBER = {3},
     PAGES = {532--562},
      ISSN = {0024-6115,1460-244X},
   MRCLASS = {11N05 (11N36)},
  MRNUMBER = {1851081},
MRREVIEWER = {D.\ R.\ Heath-Brown},
       DOI = {10.1112/plms/83.3.532},
       URL = {https://doi.org/10.1112/plms/83.3.532},
}

@article {Kos76,
    AUTHOR = {Kosto{\v c}ka, E. A.},
     TITLE = {Piercing the edges of the {$n$}-dimensional unit cube},
   JOURNAL = {Diskret. Analiz},
  FJOURNAL = {Akademiya Nauk SSSR. Sibirskoe Otdelenie. Institut Matematiki. Diskretny\u i\ Analiz. Sbornik Trudov},
      YEAR = {1976},
     PAGES = {55--64, 79},
   MRCLASS = {52A25},
  MRNUMBER = {467534},
MRREVIEWER = {Z.\ N\'aden\'ik},
}

@article {JE89,
    AUTHOR = {Johnson, Karen Anne and Entringer, Roger},
     TITLE = {Largest induced subgraphs of the {$n$}-cube that contain no {$4$}-cycles},
   JOURNAL = {J. Combin. Theory Ser. B},
  FJOURNAL = {Journal of Combinatorial Theory. Series B},
    VOLUME = {46},
      YEAR = {1989},
    NUMBER = {3},
     PAGES = {346--355},
      ISSN = {0095-8956,1096-0902},
   MRCLASS = {05C35 (05C38)},
  MRNUMBER = {999702},
MRREVIEWER = {S.\ F.\ Kapoor},
       DOI = {10.1016/0095-8956(89)90054-3},
       URL = {https://doi.org/10.1016/0095-8956(89)90054-3},
}

@article{HLZ26DaisyOld,
    AUTHOR = {Hou, Jianfeng and Liu, Xizhi and Zhang, Yixiao},
     TITLE = {Improved upper bound for the {Tur\' a}n density of daisies},
      NOTE = {Manuscript},
      YEAR = {2026},
}

@article{HLP24,
    AUTHOR = {Halfpap, Anastasia and Lemons, Nathan and Palmer, Cory},
     TITLE = {Positive co-degree density of hypergraphs},
   JOURNAL = {arXiv preprint arXiv:2207.05639},
      YEAR = {2024},
}

@article{BHLP24,
    AUTHOR = {Balogh, J\'{o}zsef and Halfpap, Anastasia and Lidick\'{y}, Bernard and Palmer, Cory},
     TITLE = {{P}ositive co-degree densities and jumps},
   JOURNAL = {arXiv preprint arXiv:2412.08597},
      YEAR = {2024},
}

@article {Alo24,
AUTHOR = {Alon, Noga},
TITLE = {Erasure list-decodable codes and {T}ur{\'a}n hypercube problems},
JOURNAL = {Finite Fields Appl.},
VOLUME = {100},
YEAR = {2024},
PAGES = {Paper No. 102513, 15},
DOI = {10.1016/j.ffa.2024.102513},
}

@article {Buk09,
AUTHOR = {Bukh, Boris},
TITLE = {Set families with a forbidden subposet},
JOURNAL = {Electron. J. Combin.},
VOLUME = {16},
YEAR = {2009},
NUMBER = {1},
PAGES = {Research Paper 142, 11},
DOI = {10.37236/231},
}

@article {Hal35,
AUTHOR = {Hall, P.},
TITLE = {On {R}epresentatives of {S}ubsets},
JOURNAL = {J. London Math. Soc.},
VOLUME = {10},
YEAR = {1935},
NUMBER = {1},
PAGES = {26--30},
DOI = {10.1112/jlms/s1-10.37.26},
}
\end{document}